\documentclass[12pt]{amsart}
\usepackage{geometry}               
\geometry{letterpaper}               
\usepackage{amssymb}
\usepackage{mathrsfs}
\usepackage{cite}
\usepackage{amsmath}
\usepackage{graphicx} 
\usepackage{calc}
\usepackage{hyperref}
\usepackage{amsthm}
\usepackage{amssymb}
\usepackage[english]{babel}
\usepackage{indentfirst}

\theoremstyle{definition}
\newtheorem{theorem}{Theorem}[section]
\newtheorem{definition}[theorem]{Definition}
\newtheorem{corollary}[theorem]{Corollary}
\newtheorem{lemma}[theorem]{Lemma}
\newtheorem{proposition}[theorem]{Proposition}
\newtheorem{remark}[theorem]{Remark}

\title[$C^{\infty}$ closing for Hamiltonian flows on symplectic $4$-manifolds]{On the $C^{\infty}$ closing lemma for Hamiltonian flows on symplectic $4$-manifolds}
\author{Dong Chen}

\address{Dong Chen: Ohio State University, Department of Mathematics, Columbus, OH 43210, USA}
\email{chen.8022@osu.edu}
\date{}

\begin{document}

\maketitle
\begin{abstract}
The main result in this paper is the $C^{\infty}$ closing lemma for a large family of Hamiltonian flows on $4$-dimensional symplectic manifolds, which includes classical Hamiltonian systems. First we prove the $C^{\infty}$ closing lemma and the $C^r$ general density theorem for geodesic flows on closed Finsler surfaces by combining a result of Asaoka-Irie with the dual lens map technique. Then we extend our results to Hamiltonian flows with certain restriction. We also list some applications of our results in differential geometry and contact topology.
\end{abstract}

\section{Introduction}
\footnote{2010 \emph{Mathematics Subject Classification.} 58B20, 37J55, 53D35.

\emph{Key words and phrases.} $C^{\infty}$ closing lemma, Finsler metric, duel lens map, Reeb flow, Hamiltonian flow, perturbation.
}

The $C^r (r\geq 2)$ closing lemma was put forward by Smale (\cite{S98}, Problem 10) as one of the most important open problems for this century. Given any non-wandering point $v$ of a dynamical system, either a diffeomorphism or a flow, the goal of the $C^r$ closing lemma is to make a $C^r$ pertubation of the dynamic to close the orbit which starts at $v$.

The history of the closing problem dates back to Poincar\'{e} \cite{P92}. It has been actively studied since the pioneering work of Pugh \cite{P67b}\cite{P67a}, who established the $C^1$ closing lemmas for flows and diffeomorphisms. The $C^1$ closing lemmas for symplectic diffeomorphisms and Hamiltonian flows were proved later by Pugh-Robinson \cite{PR83}. In higher smoothness $r\geq 2$, despite of the strong interest in the closing problem, the $C^r$ closing lemma was only known in a few special cases \cite{G00}\cite{AZ12}. A celebrated counterexample constructed by Herman\cite{H91} disproves the $C^{\infty}$ closing lemma for Hamiltonian flows.  Recently Asaoka-Irie\cite{AI16} proved the $C^{\infty}$ closing lemma for Hamltonian diffeomorphisms of closed surfaces by applying the ECH (embedded contact homology) techniques developed by Cristofaro-Gardiner, Hutchings and Ramos \cite{CHR15}.

As we turn our attention to Riemannian geodesic flows, only the $C^0$ closing lemma \cite{R12} was confirmed. The major difficulty in the setting of Riemannian geodesic flows is one cannot make a local perturbation without breaking the Riemannian structure. However, the Finsler counterparts are more flexible \cite{C17b}, allowing us to make perturbations in an arbitrarily small neighborhood.

For any $2\leq r\leq \infty$, we use $\mathcal{F}^{r}(\Sigma)$ to denote the set of all $C^r$ smooth Finsler metrics on $\Sigma$, equipped with $C^r$ topology. Denote by $\mathcal{F}_R^{r}(\Sigma)$ the subspace of reversible $C^r$ smooth Finsler metrics. A subset in $\mathcal{F}^{r}(\Sigma)$ (resp. $\mathcal{F}_R^{r}(\Sigma)$) is called \textit{residual} if it contains a countable intersection of open dense sets. Denote by $U_{\varphi}M$ the unit tangent bundle of $(M,\varphi)$. 

\begin{theorem}\label{clslem}
Let $\Sigma$ be a closed surface. For any $\varphi\in\mathcal{F}^{\infty}(\Sigma)$(resp. $\varphi\in\mathcal{F}_R^{\infty}(\Sigma)$) and any $v\in U_\varphi \Sigma$, there exists a $C^{\infty}$-small perturbation $\tilde{\varphi}\in\mathcal{F}^{\infty}(\Sigma)$(resp. $\tilde{\varphi}\in\mathcal{F}_R^{\infty}(\Sigma)$) of $\varphi$, such that the geodesic in $(\Sigma,\tilde{\varphi})$ which starts at $v$ is closed.
\end{theorem}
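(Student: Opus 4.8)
The plan is to transfer the $C^\infty$ closing lemma for three-dimensional Reeb flows, due to Asaoka--Irie, to the Finsler setting by means of the dual lens map of \cite{C17b}; this is the combination advertised in the abstract. First I would recall the standard dictionary: a $C^\infty$ Finsler metric $\varphi$ on $\Sigma$ is the same datum as the fibrewise strongly convex, star-shaped closed hypersurface $S^*_\varphi\Sigma=\{\varphi^*=1\}\subset T^*\Sigma$, and the restriction $\lambda_\varphi$ of the canonical $1$-form $\lambda_{\mathrm{can}}=p\,dq$ to $S^*_\varphi\Sigma$ is a contact form whose Reeb flow is conjugate, via the Legendre transform identifying $U_\varphi\Sigma$ with $S^*_\varphi\Sigma$, to the geodesic flow of $\varphi$. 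Under this dictionary, $v\in U_\varphi\Sigma$ becomes a point $p\in S^*_\varphi\Sigma$; a $C^\infty$-small perturbation of $\varphi$ becomes a $C^\infty$-small perturbation of the hypersurface inside the open class of fibrewise strongly convex star-shaped hypersurfaces; and ``$\psi=\varphi$ off a ball $B\subset\Sigma$'' becomes ``$S^*_\psi\Sigma=S^*_\varphi\Sigma$ over $\Sigma\setminus B$''. (I note in passing that $\lambda_\varphi\wedge d\lambda_\varphi$ is a Reeb-invariant volume form on the closed manifold $S^*_\varphi\Sigma$, so the wandering set is open of zero measure, hence empty; thus every $v$ is automatically non-wandering and no recurrence hypothesis is needed.)

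Next I would fix a tiny neighbourhood $U\ni p$ lying over a small ball $B\subset\Sigma$ about $x:=\pi(p)$, and invoke the Asaoka--Irie $C^\infty$ closing lemma for Reeb flows on closed contact $3$-manifolds (the ECH-based result built on \cite{CHR15}): there is a $C^\infty$-small function $g$ supported in $U$ such that the Reeb flow of $e^{g}\lambda_\varphi$ has a periodic orbit $\gamma$ meeting $U$, hence passing through some $q\in U$ with $d(p,q)<\varepsilon$ and $\varepsilon\to0$ as $U$ shrinks. The first use of the dual lens map is to convert this contact-form perturbation into a genuine Finsler perturbation: a radial rescaling of $S^*_\varphi\Sigma$ over $B$ (equivalently, multiplication of $\varphi^*$ by a positive function that is $C^\infty$-close to $1$, equal to $1$ near the zero section, and supported over $B$) produces a hypersurface carried onto $(S^*_\varphi\Sigma,e^{g}\lambda_\varphi)$ by a fibrewise dilation and hence with the same Reeb dynamics; the cut-off near the zero section keeps $(\varphi^*)^2$ smooth there, and fibrewise strong convexity and star-shapedness persist because the change is $C^\infty$-small and these are open conditions on the compact manifold $\Sigma$. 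This produces $\varphi_1\in\mathcal{F}^{\infty}(\Sigma)$, $C^\infty$-close to $\varphi$ and equal to $\varphi$ off $B$, whose geodesic flow has a closed orbit through $q$.

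The remaining --- and most delicate --- step is to upgrade ``closed geodesic through $q$'' to ``closed geodesic through $v$'', using that $d(p,q)<\varepsilon$. I would take two local transversals $P_0\ni v$ and $P_1$ bracketing the piece of geodesic that crosses a small ball around $x$, so that the geodesic flow of $\varphi_1$ factors as a (dual) lens map $L\colon P_0\to P_1$ across the ball followed by a return map outside the ball, the latter fixed because it is governed by $\varphi_1$ away from the ball; the condition that the geodesic through $v$ be closed then becomes a point condition on the lens map (``$L(v)$ must equal a prescribed point determined by the outside return map''). Since $v$ and $\gamma$ are $O(\varepsilon)$-close and $\gamma$ already satisfies the corresponding condition, the prescribed target of $L(v)$ is within $O(\varepsilon)$ of $L(v)$ itself, so the dual lens map technique realizes the needed $C^\infty$-small exact perturbation of $L$ by an arbitrarily $C^\infty$-small Finsler perturbation supported in the ball. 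The resulting $\tilde\varphi$ closes the geodesic through $v$. For $\varphi\in\mathcal{F}_R^{\infty}(\Sigma)$ I would run all of the above equivariantly with respect to the flip $\iota\colon(x,w)\mapsto(x,-w)$ --- which reverses both the geodesic flow and the lens maps --- taking $U$ disjoint from $\iota(U)$ and symmetrizing every perturbation; since ``the geodesic through $v$ is closed'' is $\iota$-invariant, the symmetrized $\tilde\varphi$ lies in $\mathcal{F}_R^{\infty}(\Sigma)$ and still closes the geodesic through $v$.

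I expect the main obstacle to be this passage between categories: Asaoka--Irie supply a perturbation in the large space of contact forms, whereas Finsler metrics form the constrained subclass arising from fibrewise strongly convex star-shaped hypersurfaces, and one must move between the two --- first to realize Asaoka--Irie's perturbation as a metric perturbation, then to convert its periodic orbit \emph{near} $v$ into one passing \emph{through} $v$ --- while keeping everything $C^\infty$-small, preserving fibrewise convexity and smoothness across the zero section, and localizing the support in $\Sigma$. The dual lens map technique of \cite{C17b} is precisely the tool that performs these conversions, and the reversible case only adds the bookkeeping of carrying each step out $\iota$-equivariantly.
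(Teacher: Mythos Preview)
Your overall plan matches the paper's: invoke Asaoka--Irie to produce a closed Reeb orbit near $v$, convert this into a Finsler closed geodesic near $v$, then make a second local perturbation to route it through $v$. The first conversion you propose --- radially rescaling $S^*_\varphi\Sigma$ so that $(S^*_\varphi\Sigma,e^g\lambda_\varphi)$ becomes strictly contactomorphic to $(S',\lambda_{\mathrm{can}}|_{S'})$ for a nearby fibrewise strongly convex hypersurface $S'$ --- is more direct than what the paper does: the paper routes this step through the dual lens map as well (Proposition~3.5), and in the remark after its Lemma~2.2 explicitly dismisses the naive approach. Your cotangent-side argument, relying on strong convexity being $C^2$-open, looks sound and is a genuine shortcut.

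The gap is in your second step. You reduce ``the geodesic through $v$ is closed'' to a single point condition $L'(v)=\text{(prescribed target)}$, with the target $O(\varepsilon)$-close to $L(v)$. This equivalence holds only if the closed orbit $\gamma$ from step one meets your transversal $P_0$ exactly once per period; if it meets $P_0$ at $q_0,\dots,q_{N-1}$ then the closing condition is $(R\circ L')^N(v)=v$, and the natural target $R^{-1}(v)$ sits near $L(q_{N-1})$, which need not be anywhere near $L(v)$. Moreover, your perturbation of $L$ near $v$ will also alter $L$ at those other $q_i$ that fall in its support, so you cannot treat the outside return map as fixed. The paper handles exactly this difficulty by using \emph{two} adjacent discs $D^\pm$ meeting only at $\pi(v)$: it perturbs $\sigma^-$ (via the Newhouse-type Lemma~4.1, which you also need but do not invoke) to force the orbit through $v$, and simultaneously perturbs $\sigma^+$ so that the composite $\tilde\sigma^+\eta\tilde\sigma^-=\sigma^+\eta\sigma^-$ is unchanged on the entire support. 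That identity guarantees the orbit emerges from $D^+$ exactly as before at \emph{every} passage, however many there are, so $\gamma$ remains closed while now threading through $v$. Your one-ball argument can probably be salvaged by shrinking $P_0$ \emph{after} $\gamma$ is fixed until $\gamma\cap P_0$ is a single point, but as written this is not addressed.
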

Note that the phase space of a geodesic flow is a symplectic manifold with exact symplectic form, which is not the case for Hermann's counterexample \cite{H91}.
Thus our results  provides evidence that the $C^{\infty}$ closing lemma should hold for Hamiltonian flows on exact symplectic manifolds.

To prove Theorem \ref{clslem} we first prove the following localized closing lemma:
\begin{theorem}\label{locclslem}
Let $\Sigma$ be a closed surface. For any $\varphi\in\mathcal{F}^{\infty}(\Sigma)$ (resp. $\varphi\in\mathcal{F}_R^{\infty}(\Sigma)$) and any non-empty open set $U\subseteq U_\varphi \Sigma$, $\varphi$ admits a $C^{\infty}$ small perturbation in $\mathcal{F}^{\infty}(\Sigma)$ (resp. $\varphi\in\mathcal{F}_R^{\infty}(\Sigma)$) such that the resulting geodesic flow has a periodic orbit passing through $U$.
\end{theorem}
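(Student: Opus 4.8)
The plan is to move the problem to the cotangent bundle, apply the $C^\infty$ closing lemma of Asaoka--Irie for Reeb flows on closed contact $3$-manifolds, and then use the dual lens map technique to convert the resulting perturbation of the contact form into a small, localized perturbation of the Finsler metric. First I would use the Legendre transform to identify $U_\varphi\Sigma$ with the unit cotangent bundle $S^*_\varphi\Sigma=\{(q,p)\in T^*\Sigma:\ \varphi^*(q,p)=1\}$, where $\varphi^*$ is the dual Finsler metric; this identification conjugates the geodesic flow of $\varphi$ with the Reeb flow of $\lambda:=\lambda_{\mathrm{std}}|_{S^*_\varphi\Sigma}$, where $\lambda_{\mathrm{std}}=p\,dq$ is the tautological $1$-form on $T^*\Sigma$. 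The gain is the standard dictionary: the hypersurfaces of $T^*\Sigma$ bounding a fibrewise strictly convex domain that contains the zero section are in bijection with the Finsler metrics on $\Sigma$, with $C^\infty$-closeness of the hypersurfaces corresponding to $C^\infty$-closeness of the metrics, and with the Reeb flow of $\lambda_{\mathrm{std}}$ restricted to any such hypersurface being (conjugate to) a Finsler geodesic flow. After shrinking we may assume $U$ is a small ball; let $\widehat U\subseteq S^*_\varphi\Sigma$ be the corresponding open set.

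Next I would invoke the Reeb-flow $C^\infty$ closing lemma of Asaoka--Irie \cite{AI16}: for the contact form $\lambda$ on the closed $3$-manifold $S^*_\varphi\Sigma$ and the nonempty open set $\widehat U$, there is a function $f\colon S^*_\varphi\Sigma\to(0,\infty)$ with $\|f-1\|_{C^\infty}$ as small as we wish and $\operatorname{supp}(f-1)$ contained in any prescribed neighbourhood of $\widehat U$, such that the Reeb flow of $f\lambda$ has a closed orbit meeting $\widehat U$. This yields the desired closed orbit after a controlled, localized change of contact form, and what is left is the geometric problem of realizing $f\lambda$, up to conjugacy, as the geodesic flow of a nearby Finsler metric. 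This last step is not a formality: the Reeb fields of $f\lambda$ and $\lambda$ differ by $R_{f\lambda}=\tfrac{1}{f} R_\lambda + v$ with $v\in\ker\lambda$, so one cannot simply carry the closed orbit back.

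The remaining, and most delicate, step is the dual lens map construction (cf. \cite{C17b}). Since $f$ is $C^\infty$-close to $1$ and supported in a small region, I would modify $S^*_\varphi\Sigma$ fibrewise over that region to a hypersurface $\widetilde S\subset T^*\Sigma$ that still bounds a fibrewise strictly convex domain around the zero section, agrees with $S^*_\varphi\Sigma$ outside the region, is $C^\infty$-close to it, and is such that a natural diffeomorphism $\widetilde S\to S^*_\varphi\Sigma$ --- the dual lens map --- pulls $\lambda_{\mathrm{std}}|_{\widetilde S}$ back to $f\lambda$. By the dictionary above, $\widetilde S=S^*_{\widetilde\varphi}\Sigma$ for a Finsler metric $\widetilde\varphi$ that is a $C^\infty$-small perturbation of $\varphi$, and its geodesic flow, being conjugate to the Reeb flow of $f\lambda$, has a periodic orbit through $U$ (under the natural identification $U_{\widetilde\varphi}\Sigma\cong U_\varphi\Sigma$). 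For the reversible case one carries out the whole argument equivariantly with respect to the flip $p\mapsto-p$: after shrinking $\widehat U$ so that it is disjoint from its flipped image, one performs the mirrored modification simultaneously near both copies, so that $\widetilde S$ is symmetric and $\widetilde\varphi$ reversible and the closed orbit, traversed backwards, still meets $U$; a self-symmetric $\widehat U$ is handled by taking the fibrewise modification symmetric from the start.

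I expect the dual lens map step to be the main obstacle. One must check that the fibrewise surgery on the convex bodies preserves strict convexity --- which is precisely where the smallness of $f-1$ enters ---, that it produces exactly the conformal factor $f$ on the induced contact form, that the resulting Finsler metric remains $C^\infty$-close to $\varphi$ and coincides with it off a small set, and, in the reversible subcase, that all of this can be made invariant under the flip. Some care is also required in translating between the $C^\infty$ topology on contact forms and the $C^\infty$ topology on Finsler metrics, as these are a priori different.
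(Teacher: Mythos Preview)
Your overall strategy is sound, and in fact the route you sketch is \emph{simpler} than the paper's --- but you have misidentified the key tool. What you call ``the dual lens map'' is not the dual lens map at all: the construction you describe is simply the fibrewise \emph{radial rescaling} $\Psi_f:(q,p)\mapsto(q,f(q,p)\,p)$ from $S^*_\varphi\Sigma$ onto its image $\widetilde S$, and the identity $\Psi_f^*(\lambda_{\mathrm{std}})=f\lambda$ is a one-line computation with the tautological form. Since $f$ is $C^\infty$-close to $1$, each fibre of $\widetilde S$ is a $C^2$-small perturbation of a strictly convex curve enclosing the origin, hence still strictly convex; so $\widetilde S=S^*_{\tilde\varphi}\Sigma$ for a Finsler metric $\tilde\varphi$ that is $C^\infty$-close to $\varphi$, and its geodesic flow is strictly conjugate to the Reeb flow of $f\lambda$. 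Taking $\operatorname{supp}h$ compact inside a flow box $\widehat U$, the periodic Reeb orbit produced by Asaoka--Irie must leave $\operatorname{supp}h$ and hence meets $\widehat U\setminus\operatorname{supp}h$, where $\Psi_f$ is the identity; this yields the periodic $\tilde\varphi$-geodesic through the required set. For the reversible case it is cleanest to take $h$ invariant under $p\mapsto -p$ from the outset: then $\widetilde S$ is symmetric and $\tilde\varphi$ reversible, and if the Asaoka--Irie orbit meets $-\widehat U$ rather than $\widehat U$, its time-reversal does the job. So the step you flag as ``the main obstacle'' is in fact elementary.

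By contrast, the paper's \emph{dual lens map} is the boundary scattering map $\sigma:U^*_{in}\to U^*_{out}$ of a simple Finsler disc $D$: one records where and in which codirection a geodesic entering $D$ exits. The paper places $D$ disjoint from $\pi(U)$, flows $\widehat U$ back into $D$, performs the Asaoka--Irie perturbation there, reads off the induced symplectic perturbation $\tilde\sigma$ of $\sigma$, and invokes the Burago--Ivanov theorem (Proposition~\ref{maincor}) to realize $\tilde\sigma$ as the scattering map of a perturbed Finsler metric on $D$. This is more elaborate and does \emph{not} make the new geodesic flow conjugate to the perturbed Reeb flow inside $D$; only the boundary map coincides, which is enough because the closed orbit agrees with the original flow outside $D$. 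The payoff of that machinery shows up in the proof of the pointwise closing lemma (Theorem~\ref{clslem}), where one needs the freedom to \emph{prescribe} a symplectic perturbation of the scattering map and then realize it metrically --- something your direct rescaling does not provide. In short: your argument gives a cleaner proof of Theorem~\ref{locclslem}, but you should rename the key step; it is not the dual lens map.
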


Theorem \ref{locclslem} implies the $C^r(r\geq 2)$ general density theorem:
\begin{theorem}\label{dencor}
For any closed surface $\Sigma$ there exists a residual set $\mathcal{P}\subset \mathcal{F}^{r}(\Sigma)$ (or $\mathcal{F}_R^{r}(\Sigma)$) such that the periodic geodesics of any $\varphi\in\mathcal{P}$ form a dense set in $U_{\varphi}\Sigma$.
\end{theorem}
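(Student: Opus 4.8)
The plan is a Baire category argument over $\mathcal F^r(\Sigma)$ (resp.\ $\mathcal F_R^r(\Sigma)$), fed by Theorem \ref{locclslem} together with one extra perturbation to make the produced orbit robust. First I would fix an auxiliary Riemannian metric $g_0$ on $\Sigma$ and use $g_0$-radial projection $v\mapsto v/|v|_{g_0}$ to identify every unit tangent bundle $U_\varphi\Sigma$ with the single sphere bundle $S\Sigma:=U_{g_0}\Sigma$; this is a homeomorphism for each $\varphi$, so "dense in $U_\varphi\Sigma$" becomes "dense in $S\Sigma$". Since $S\Sigma$ is second countable, fix a countable base $\{V_n\}_{n\in\mathbb N}$ of nonempty open subsets of $S\Sigma$. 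For each $n$ let $\mathcal D_n$ be the set of Finsler metrics whose geodesic flow has a closed orbit whose velocity curve meets $V_n$ under this identification, and set $\mathcal U_n:=\operatorname{int}\mathcal D_n$ and $\mathcal P:=\bigcap_n\mathcal U_n$. If $\varphi\in\mathcal P$, then for every $n$ it has a closed geodesic whose velocity curve meets $V_n$, so those velocity curves are dense in $U_\varphi\Sigma$. As $\mathcal F^r(\Sigma)$ and $\mathcal F_R^r(\Sigma)$ are Baire spaces (open subsets of $C^r(S\Sigma)$, resp.\ of its subspace of fiberwise even functions), it suffices to show each $\mathcal U_n$ is open and dense; openness is immediate.

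For density, fix $\varphi$ and a $C^r$-neighborhood $W$. Approximating $\varphi$ by a $C^\infty$ metric inside $W$ and applying Theorem \ref{locclslem} (a $C^\infty$-small perturbation being in particular $C^r$-small), I obtain $\tilde\varphi\in W$ whose geodesic flow has a closed orbit $t\mapsto(\gamma(t),\dot\gamma(t))$ with $(\gamma(0),\dot\gamma(0))$ mapping into $V_n$. I then want to upgrade $\tilde\varphi$, still inside $W$, so that this closed geodesic is nondegenerate, i.e.\ the linearized Poincar\'e return map of $\gamma$ has no eigenvalue $1$; on a surface this means pushing the return map off the codimension-one locus $\{\operatorname{tr}=2\}\subset SL(2,\mathbb R)$. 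The mechanism is a $C^\infty$-small bump of $\tilde\varphi$ supported in a small neighborhood of $(\gamma(0),\dot\gamma(0))$ in $T\Sigma\setminus 0$, whose low-order jet along the velocity curve of $\gamma$ is prescribed so that the geodesic spray is unchanged along that curve — hence $\gamma$ remains a closed geodesic still meeting $V_n$ — while the next order of the jet is tuned to perturb the transverse Jacobi equation and thus move the return map off $\{\operatorname{tr}=2\}$; in the reversible case one takes the bump fiberwise even. Calling the result $\hat\varphi\in W$, a nondegenerate closed geodesic persists, with its continuation still meeting the open set $V_n$, under $C^r$-small perturbations of the metric, so a whole $C^r$-neighborhood of $\hat\varphi$ lies in $\mathcal D_n$; hence $\hat\varphi\in\mathcal U_n\cap W$, and $\mathcal U_n$ is dense. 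Baire's theorem then makes $\mathcal P$ residual with the stated property.

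The only step that is not soft is this bump, making the closed geodesic nondegenerate while preserving it. It is a localized version of the perturbations behind bumpy metric theorems (Anosov, Klingenberg--Takens), but since it is needed only along a single compact orbit segment it is considerably lighter: one forces the low-order jet of the perturbation to vanish along the orbit to keep it geodesic, and uses the one remaining degree of freedom in the linearized flow of a surface geodesic to leave the codimension-one degenerate locus. I expect this to be the main obstacle to write cleanly — in particular keeping careful track of which jets of the Finsler function enter the spray versus its linearization, and checking that the perturbed function is still a genuine (reversible) Finsler metric. The remaining ingredients — a common model for the unit tangent bundles, the Baire property of $\mathcal F^r$ and of the reversible subspace, and persistence of nondegenerate closed orbits (via the implicit function theorem, or the fixed-point index at low regularity) — are routine.
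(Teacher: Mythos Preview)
Your argument is correct and is exactly the ``standard Baire argument'' the paper's two-line proof invokes: a countable base of a fixed model $S\Sigma$, Theorem \ref{locclslem} for density, and $C^\infty$-approximation to handle $r<\infty$. The one step you rightly make explicit (and the paper leaves implicit) is that the closed orbit produced must be made robust, which you do by pushing the Poincar\'e return map off the degenerate locus. Your bumpy-metric bump is fine, but note that the paper's own machinery gives a cleaner route and removes precisely the obstacle you flag: by Proposition \ref{maincor} (equivalently Proposition \ref{pertham}), any $C^\infty$-small symplectic perturbation of the return map supported near the fixed point --- in particular one moving it off the codimension-one set $\{\operatorname{tr}=2\}\subset SL(2,\mathbb R)$ --- is realized by a $C^\infty$-small (reversible, if needed) Finsler perturbation supported in a small disc away from $V_n$, so no jet bookkeeping along $\gamma$ is required.
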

\begin{proof}
Using the standard Baire argument together with Theorem \ref{locclslem} we can prove the case $r=\infty$. For $r< \infty$ notice that $\mathcal{F}^{\infty}(\Sigma)$ is a dense subset in $\mathcal{F}^{r}(\Sigma)$.
\end{proof}

Theorem \ref{clslem} also holds for a more general family of Hamitonian flows. Let $H: \Omega\to\mathbb{R}$ be a smooth Hamiltonian on a symplectic manifold $(\Omega, \omega)$ and let $\Phi^t_H$ be the corresponding Hamitonian flow. We say that the level set $H^{-1}(h)$ is {\it regular} if $h$ is a regular value of $H$. A regular level set $H^{-1}(h)$ is {\it of contact type} (see \cite{W79}) if there exists a $1$-form $\lambda$ on $H^{-1}(h)$ such that $d\lambda=\omega$ and $\lambda(\dot{\Phi}_H^t(x))\neq 0$ for any $x\in H^{-1}(h)$. 
 
\begin{theorem}\label{hamclosing}
Let $(\Omega, \omega)$ be a symplectic $4$-manifold and $H: \Omega\to\mathbb{R}$ be a smooth Hamiltonian. Assume that the regular level set $H^{-1}(h)$ is compact and of contact type. Then for any $v\in H^{-1}(h)$, one can make a $C^{\infty}$-small perturbation $\tilde{H}$ of $H$ such that the $\Phi^t_{\tilde{H}}$-orbit through $v$ is closed.
\end{theorem}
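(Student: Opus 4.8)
The plan is to reduce Theorem~\ref{hamclosing} to the closing phenomenon for Reeb flows on closed contact $3$-manifolds that underlies Theorem~\ref{clslem}, transporting everything through a symplectization collar that plays here the role the dual lens map plays in the Finsler case.

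\emph{Reduction to a Reeb flow.} First I would set $M:=H^{-1}(h)$, a closed $3$-manifold, and let $\lambda$ be the contact form on $M$ provided by the contact-type hypothesis, so $d\lambda=\omega|_M$ and $\lambda$ is contact. Since $H$ is constant along the Hamiltonian vector field $X_H$, the field $X_H$ is tangent to $M$ and lies in $\ker(\omega|_M)=\ker(d\lambda)$, which on the $3$-manifold $M$ is the line field $\mathbb R\,R_\lambda$; as $h$ is regular, $X_H$ vanishes nowhere on $M$, so $X_H|_M=f\,R_\lambda$ with $f$ nowhere zero. Hence the $\Phi_H$-orbits inside $M$ are, as unparametrized curves, exactly the Reeb orbits of $\lambda$, and it suffices to perturb $H$ so that on the (slightly moved) level set through $v$ some Reeb orbit is closed and passes through $v$.

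\emph{The symplectization collar.} Next, choosing a Liouville vector field $X$ near $M$, transverse to $M$, with $\iota_X\omega|_M=\lambda$, its flow identifies a neighborhood of $M$ in $\Omega$ with $\bigl(M\times(-\delta,\delta),\,d(e^s\lambda)\bigr)$, $M\leftrightarrow M\times\{0\}$. For a $C^\infty$-small $\rho\colon M\to\mathbb R$ with $\rho(v)=0$, pulling $d(e^s\lambda)$ back along the graph $x\mapsto(x,\rho(x))$ gives $d(e^\rho\lambda)$, so the characteristic foliation of the graph hypersurface $M_\rho:=\{s=\rho(x)\}$ is the Reeb foliation of $e^\rho\lambda$, and $v\in M_\rho$. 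Because $dH(X)\neq0$ near $M$, one can alter $H$ only inside the collar, by a $C^\infty$-small amount, leaving it unchanged near the collar boundary, to obtain a global $\tilde H$, $C^\infty$-close to $H$, with $\tilde H^{-1}(h)=M_\rho$; along this level set the $\Phi_{\tilde H}$-orbits are again the Reeb orbits of $e^\rho\lambda$. In short, \emph{every $C^\infty$-small conformal change $\lambda\rightsquigarrow e^\rho\lambda$ with $\rho$ vanishing at $v$ is realized by a $C^\infty$-small perturbation of $H$.}

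\emph{Closing the Reeb orbit through $v$, and the main obstacle.} It then remains to choose such a $\rho$ for which $e^\rho\lambda$ has a closed Reeb orbit through $v$ — the closing lemma for Reeb flows on the closed contact $3$-manifold $(M,\lambda)$, which I would prove exactly as for Theorem~\ref{clslem}. The Asaoka--Irie / ECH input \cite{AI16}\cite{CHR15} gives: an arbitrarily $C^\infty$-small, nonnegative conformal perturbation supported in an arbitrarily small neighborhood of $v$ and vanishing at $v$ forces a closed Reeb orbit $\gamma$ through the support, hence passing within any prescribed $\eta>0$ of $v$. One then picks a contactomorphism of $(M,\ker\lambda)$ that is $C^\infty$-close to the identity, supported in a small ball around $v$, carrying $v$ onto $\gamma$ and fixing the $1$-jet of the contact form at $v$; it multiplies the contact form by a further conformal factor equal to $1$ at $v$ and produces a closed Reeb orbit through $v$. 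Taking $\rho$ to be the sum of the two $\log$-factors and feeding it into the collar construction produces the required $\tilde H$. The delicate point is this last step: the ECH argument only locates the closed orbit somewhere inside the shrinking support of the first perturbation, not at $v$, and a naive steering perturbation has higher derivatives that blow up as the orbit is pushed toward $v$ — precisely the phenomenon behind the failure of the general $C^\infty$ closing lemma \cite{H91}. The way around it is to fix the radius of the steering ball \emph{in advance} and let $\eta\to0$ only afterwards: a $C^\infty$-neighborhood is cut out by finitely many $C^k$-seminorm bounds, so once the target neighborhood and the steering ball are fixed, all relevant seminorms of the steering perturbation are $O(\eta)$ and a small enough $\eta$ keeps them under control. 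The contact-type constraint costs nothing extra, since $e^\rho>0$ keeps every $M_\rho$ of contact type automatically.
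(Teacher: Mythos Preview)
Your approach is sound and takes a genuinely different route from the paper. The paper works through Poincar\'e maps: it first proves a local version (Proposition~\ref{hamlocclosing}) by applying Lemma~\ref{echlem} to the Reeb flow on $H^{-1}(h)$ and then invoking Proposition~\ref{pertham}, which uses the Hamiltonian flow-box theorem to realize any $C^\infty$-small symplectic perturbation of the Poincar\'e return map between two transverse sections as a $C^\infty$-small, compactly supported perturbation of $H$; the passage from the local statement to closing at the prescribed point $v$ is then carried out exactly as in the Finsler case, via Newhouse's Lemma~\ref{pertlem} applied on the $2$-dimensional symplectic section. You instead bypass Poincar\'e sections entirely and use the Liouville/symplectization collar to turn conformal deformations $\lambda\mapsto e^\rho\lambda$ directly into graph deformations of the level set, hence into perturbations of $H$; the steering step is performed on the contact $3$-manifold itself by a compactly supported contactomorphism rather than on a section by a symplectomorphism. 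Your route is closer to standard contact-topology constructions and avoids building the Poincar\'e-map/flow-box machinery; the paper's route keeps the Hamiltonian perturbation supported near a single orbit segment and makes the parallel with the dual-lens-map technique explicit. One point to tighten: the condition $\rho(v)=0$ (needed so that $v$ actually lies on the new level set $\tilde H^{-1}(h)$) does not follow automatically from ``fixing the $1$-jet of the contact form at $v$'' once $\phi(v)\ne v$; you should either check that a compactly supported contactomorphism with $\phi(v)\in\gamma$ and conformal factor exactly $1$ at $v$ can always be produced, or---more cheaply---replace $\rho$ by $\rho-\rho(v)$ at the very end, which is harmless since $|\rho(v)|$ is already as small as you like.
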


It is worth noting that not all hypersurfaces in $(\Omega,\omega)$ are of contact type \cite{W79} \cite{HZ11}. Nevertheless the assumption is flexible enough to contain a large family of Hamiltonians. For instance, if a Hamiltonian system is based on classical mechanics or differential geometry, then all of its regular compact level sets are of contact type. See Section 5 for a more detailed discussion.

Being of contact type is independent of the choice of $H$. In fact, if $H_1$ and $H_2$ have a common level set $Y$, then the orbits of their associated Hamiltonian flows on $Y$ coincide. Thus we can define periodic orbits on $Y$ without picking a particular Hamiltonian. Weinstein conjectured in \cite{W79} that any closed hypersurface $Y$ of contact type has at least one periodic orbit. This conjecture was proved by Viterbo \cite{V87} for hypersurfaces in $\mathbb{R}^{2n}$, by Hofer\cite{H93} for $Y=S^3$ and by Taubes \cite{T07} for $\dim Y=3$. Since every hypersurface $Y\subseteq \Omega$ of contact type can be represented as a regular level set of some Hamiltonian $H$, by applying the Baire argument again we get the following corollary of Theorem \ref{hamclosing}:

\begin{corollary}
Let $(\Omega, \omega)$ be a symplectic $4$-manifold and let $Y$ be a hypersurface of contact type. A $C^{\infty}$ generic closed hypersurface near $Y$ has a dense subset consisting of periodic orbits. 
\end{corollary}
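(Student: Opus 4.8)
The plan is to realize every closed hypersurface near $Y$ as a regular level set of a Hamiltonian, and then run the same Baire category argument that proves Theorem~\ref{dencor}, now fed by Theorem~\ref{hamclosing}. First I would fix a tubular neighborhood $\Psi\colon Y\times(-1,1)\to N\subseteq\Omega$ with $\Psi(\cdot,0)=\mathrm{id}_Y$ and identify a neighborhood of $Y$ in the space of closed hypersurfaces with an open neighborhood $\mathcal F$ of $0$ in $C^{\infty}(Y)$ (a Baire space in the $C^{\infty}$ topology) via $f\mapsto Y_f:=\{\Psi(y,f(y)):y\in Y\}$; each $Y_f$ is compact and is a regular level set of a Hamiltonian $H_f$ equal to $\Psi(y,t)\mapsto t-f(y)$ near $N$. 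Because $Y$ is of contact type it admits a Liouville vector field defined near $Y$ and transverse to it, and by openness of transversality together with compactness this vector field stays transverse to every hypersurface $C^{1}$-close to $Y$; so after shrinking $\mathcal F$ each $Y_f$ is again of contact type, Theorem~\ref{hamclosing} applies to $H_f$, and (as observed just before the corollary) the set $\mathrm{Per}(Y_f)\subseteq Y_f$ of points lying on periodic orbits is intrinsic to $Y_f$.

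Next I would fix a countable basis $\{U_n\}_n$ of the compact manifold $Y$; writing $\phi_f\colon Y\to Y_f$, $\phi_f(y)=\Psi(y,f(y))$, the sets $\phi_f(U_n)$ form a basis of $Y_f$, so $\mathrm{Per}(Y_f)$ is dense in $Y_f$ precisely when $\mathrm{Per}(Y_f)\cap\phi_f(U_n)\neq\emptyset$ for all $n$. Set $\mathcal A_n:=\{f\in\mathcal F:\mathrm{Per}(Y_f)\cap\phi_f(U_n)\neq\emptyset\}$. Density of $\mathcal A_n$ is exactly Theorem~\ref{hamclosing}: given $f_0$ I pick $v\in\phi_{f_0}(U_n)\subseteq Y_{f_0}=H_{f_0}^{-1}(0)$ and perturb $H_{f_0}$ to some $C^{\infty}$-close $\tilde H$ closing the orbit of $\Phi^t_{\tilde H}$ through $v$; then $\tilde H^{-1}(\tilde H(v))$ is again a graph $Y_{f_1}$ with $f_1$ close to $f_0$, and it carries a periodic orbit through a point close to $v$, hence inside the open set $\phi_{f_1}(U_n)$ once the perturbation is small enough. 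So each $\mathcal A_n$ is dense.

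The point that needs care, and that I expect to be the main obstacle, is that $\mathcal A_n$ need not be open, because the orbit produced above could be degenerate and disappear under a further perturbation. I would handle this exactly as in the classical general density theorem: after closing the orbit $\gamma$, make an additional $C^{\infty}$-small perturbation of the Hamiltonian inside a flow box around a point of $\gamma$, of the form $\epsilon\,\chi(\tau)\,Q(u,v)$, where $\tau$ is the flow direction, $(u,v)$ are transverse symplectic coordinates with $\gamma=\{u=v=0\}$, $\chi$ is a bump function, and $Q$ is a quadratic form. Since this perturbation and its differential vanish along $\gamma$, the curve $\gamma$ remains a periodic orbit on the same level set, while its linearized return map gets multiplied by an almost arbitrary small element of $\mathrm{Sp}(2,\mathbb R)$; as the symplectic matrices with eigenvalue $1$ form a proper subvariety, a generic choice of $Q$ makes $\gamma$ non-degenerate. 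A non-degenerate periodic orbit persists under $C^{\infty}$-small perturbation and, for small enough perturbations, keeps meeting the open set $\phi_f(U_n)$, so the perturbed parameter lies in the interior $\mathcal A_n^{\circ}$. Hence every $\mathcal A_n^{\circ}$ is open and dense, $\mathcal P:=\bigcap_n\mathcal A_n^{\circ}$ is residual in $\mathcal F$, and $\mathrm{Per}(Y_f)$ is dense in $Y_f$ for every $f\in\mathcal P$; unwinding the identification of $\mathcal F$ with hypersurfaces near $Y$ yields the corollary. The only things left to check carefully are that this non-degeneracy perturbation can be performed without leaving the class — automatic, since being of contact type is an open condition — and without pushing the orbit off $U_n$.
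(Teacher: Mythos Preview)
Your proposal is correct and follows the same route as the paper: the paper's entire proof of this corollary is the sentence ``by applying the Baire argument again'' (pointing to the proof of Theorem~\ref{dencor}), so you have simply spelled out that standard argument in full, including the tubular-neighborhood parametrization of nearby hypersurfaces, the openness of the contact-type condition, and the extra non-degeneracy perturbation needed to get open (not merely dense) sets $\mathcal A_n^{\circ}$. None of these details appear in the paper, so your write-up is strictly more careful than the original.
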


This paper is organized as follows: in Section 2 we introduce background and known results regarding Finsler manifolds and Reeb flows. The dual lens maps technique is presented in Section 3 with applications in perturbing Finsler metrics and Hamiltonian flows. Section 4 is devoted to prove the main theorems via tools from Section 2 and 3. One can find interesting applications of our results in both differential geometry and Hamiltonian systems in Section 5. 

\subsection*{Acknowledgement}
The author would like to thank Federico Rodriguez Hertz for bringing up this question. The author is also grateful to Dmitri Burago, Leonid Polterovich, Daniel Thompson, Andrey Gogolev, Amie Wilkinson, Aaron Brown, Andre Neves, Jeff Xia and Keith Burns for their useful comments and remarks.

\section{Preliminaries}
\subsection{Hamiltonian flows on symplectic manifolds} Let $(\Omega,\omega)$ be a symplectic manifold and let $H$ be a smooth function on $\Omega$. The \textit{Hamiltonian vector field $X_H$} on $\Omega$ is defined to be the unique solution to the equation $\omega(X_H, V)=dH(V)$ for any smooth vector field $V$ on $\Omega$. The vector field $X_H$ is well-defined due to non-degeneracy of $\omega$. The flow $\Phi_H^t$ on $\Omega$ defined by $\dot{\Phi}_H^t=X_H$ is called \textit{the Hamiltonian flow on $\Omega$ with Hamiltonian $H$}. One can easily verify that $\Phi_H^t$ preserves $\omega$ and hence the volume form $\omega^n$.

\subsection{Finsler manifolds and geodesics.} A \textit{Finsler metric} $\varphi$ on $M$ is a smooth family of quadratically convex norms on each tangent space. It is \textit{reversible} if $\varphi(v)=\varphi(-v)$ for all $v\in TM$.  If $\gamma:[a,b]\rightarrow M$ is a smooth curve on a Finsler manifold $(M, \varphi)$, the length of $\gamma$ can be defined by
$$L(\gamma):= \int^b_a \varphi(\gamma'(t))dt.$$
We define a quasimetric $d: M\times M\to \mathbb{R}$ by setting $d(x,y)$ to be the infimum of the lengths of all piecewise smooth curves starting from $x$ and ending at $y$. It is clear that $d$ satisfies the triangle inequality
$$d(x,z)\leq d(x,y)+d(y,z), \forall x,y,z\in M,$$ 
but $d$ may be non-symmetric since $d(x,y)$ may not be equal to $d(y,x)$ if $\varphi$ is not reversible. For any $x\in M, r>0$, we define the \textit{geodesic balls  $B^{\pm}_{\varphi}(x,r)$} to be 
$$B^+_{\varphi}(x,r):=\{y\in M: d(x,y)<r\}, B^-_{\varphi}(x,r):=\{y\in M: d(y,x)<r\}.$$
If $\varphi$ is reversible then $B^+_{\varphi}(x,r)=B^-_{\varphi}(x,r)$ for all $x$ and $r$. A unit-speed curve $\gamma: [a, b]\rightarrow M$ is said to be a \textit{geodesic} of $(M, \varphi)$ if for every sufficiently small interval $[c, d]\subseteq [a, b]$,  $\gamma|_{[c,d]}$ realizes the distance from $\gamma(c)$ to $\gamma(d)$. For any $v\in U_{\varphi}M$, there exists a unique geodesic $\gamma_v^\varphi: (a,b)\to M$ such that $\dot{\gamma}_v^\varphi(0)=v.$ We define the \textit{geodesic flow} $g_t^{\varphi}: U_{\varphi}M\to U_{\varphi}M$ to be $g_t^{\varphi}(v):=\dot{\gamma}_v^\varphi(t)$. Given $v\in U_{\varphi}M$ and $r>0$, we define \textit{forward geodesic disc $D^+_{\varphi}(v,r)$} and \textit{backward geodesic disc $D^-_{\varphi}(v,r)$} to be 
$$D^{\pm}_{\varphi}(v,r):=\overline{B^{\mp}_{\varphi}(\pi(g_{\pm r}^{\varphi}(v)), r)}.$$ 

\begin{figure}[htb] 
\centering\includegraphics[width=3.5in]{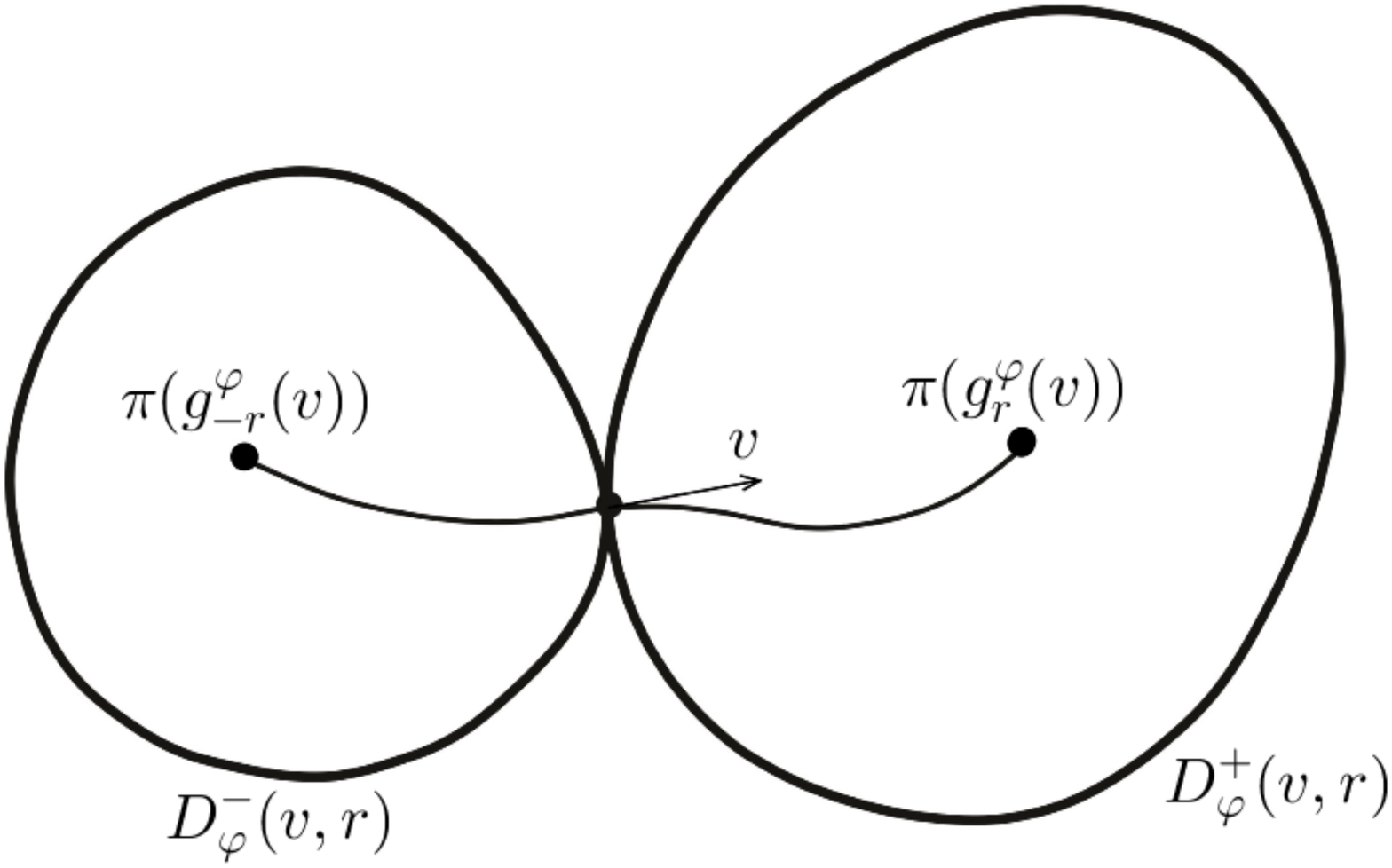} 
\caption{}\label{fig:1} 
\end{figure}

The following lemma is a simple corollary of the triangle inequality:
\begin{lemma}\label{distlem}
For all $v\in U_{\varphi}M $ and sufficiently small $r>0$, we have $D^{-}_{\varphi}(v,r)\cap D^{+}_{\varphi}(v,r)=\{\pi(v)\}$.
\end{lemma}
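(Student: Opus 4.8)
The plan is to unwind the definitions of the forward and backward geodesic discs and reduce the claim to a statement about geodesic balls, where the triangle inequality applies directly. Write $x := \pi(v)$, $y^+ := \pi(g^{\varphi}_{r}(v))$ and $y^- := \pi(g^{\varphi}_{-r}(v))$; by definition $D^+_{\varphi}(v,r) = \overline{B^-_{\varphi}(y^+, r)}$ and $D^-_{\varphi}(v,r) = \overline{B^+_{\varphi}(y^-, r)}$. Since the geodesic $\gamma^{\varphi}_v$ is unit-speed and, for $r$ small enough, minimizing on $[-r,r]$, we have $d(x, y^+) = r$ and $d(y^-, x) = r$. In particular $x \in \overline{B^-_{\varphi}(y^+,r)}$ and $x \in \overline{B^+_{\varphi}(y^-,r)}$, so $\pi(v)$ lies in the intersection; the content is the reverse inclusion.

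First I would establish the ``$\subseteq$'' direction. Suppose $z \in D^-_{\varphi}(v,r) \cap D^+_{\varphi}(v,r)$. From $z \in \overline{B^+_{\varphi}(y^-,r)}$ we get $d(y^-, z) \le r$, and from $z \in \overline{B^-_{\varphi}(y^+,r)}$ we get $d(z, y^+) \le r$. Adding these and using the triangle inequality twice along $y^- \to z \to y^+$ and $y^- \to x \to y^+$,
\[
d(y^-, y^+) \le d(y^-, z) + d(z, y^+) \le r + r = 2r = d(y^-, x) + d(x, y^+),
\]
while also $d(y^-, y^+) = 2r$ because $\gamma^{\varphi}_v|_{[-r,r]}$ is a minimizing unit-speed geodesic from $y^-$ to $y^+$ of length $2r$. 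Hence all inequalities are equalities: $d(y^-, z) = r = d(z, y^+)$ and the concatenated path $y^- \to z \to y^+$ has the same length as the minimizing geodesic $y^- \to x \to y^+$. The key step is then to argue that this forces $z = x$: a minimizing geodesic between two points that are joined by a unique minimizer (which holds for $r$ small, by the existence and uniqueness of short geodesics and the fact that short geodesics are minimizing) must coincide with $\gamma^{\varphi}_v|_{[-r,r]}$, and the only point on that geodesic at forward distance $r$ from $y^-$ and backward distance $r$ to $y^+$ is $x = \pi(v)$.

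The main obstacle is the step just described: converting the equality of distances into the conclusion $z=x$ in the possibly non-symmetric Finsler setting. One must be careful that ``sufficiently small $r$'' is chosen uniformly enough that (i) $\gamma^{\varphi}_v$ is minimizing on $[-r,r]$, (ii) $y^-$ and $y^+$ are joined by a \emph{unique} minimizing geodesic, and (iii) that minimizer depends continuously on its endpoints so that a broken minimizer of equal length is genuinely unbroken and equals $\gamma^{\varphi}_v$. These are standard facts about the exponential map of a Finsler metric (it is a local diffeomorphism near the zero section, and the injectivity radius is positive on the compact manifold), so the argument goes through, but the non-reversibility means one should track forward versus backward balls carefully and not silently symmetrize $d$. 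Once $z=x$ is forced, combined with the observation that $\pi(v)$ does lie in both discs, the equality $D^-_{\varphi}(v,r) \cap D^+_{\varphi}(v,r) = \{\pi(v)\}$ follows.
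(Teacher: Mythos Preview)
Your proposal is correct and follows exactly the approach the paper indicates: the paper states only that the lemma ``is a simple corollary of the triangle inequality'' and gives no further details, and your argument is precisely the natural unpacking of that hint. Your care in tracking forward versus backward distances in the irreversible setting, and in invoking uniqueness of short minimizers to pass from equality in the triangle inequality to $z=x$, fills in exactly the details the paper leaves implicit.
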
 
\subsection{Geodesic flow on cotangent bundles} Another version of geodesic flow is defined on the cotangent bundle $T^*M$. Notice that $T^*M$ is symplectic with a natural symplectic form $\omega=\sum dq_i\wedge dp_i$. For any Finsler manifold $(M,\varphi)$, we define the \textit{dual norm} on cotangent bundle $T^*M$ by
$$\varphi^*(\alpha):=\sup_{v\in U_{\varphi}M}\{\alpha(v)\}, \text{ for }\alpha\in T^* M.$$
\textit{The geodesic flow $g_t^{\varphi}$ on $T^*M$} is defined to be the Hamiltonian flow on the cotangent bundle $T^*M$ with Hamiltonian $(\varphi^*)^2/2$.  We denote by $U^*_{\varphi}M$ the unit cotangent bundle of $(M,\varphi)$. Since the Hamiltonian $H$ is invariant under Hamiltonian flow $\Phi_H^t$, the geodesic flow $g_t^{\varphi}$ leaves $U^*_{\varphi}M$ invariant. From now on we will only consider the geodesic flow on $U^*_{\varphi}M$ instead of $T^*M$.

The \textit{Legendre transform $\mathscr{L}$ on $U_{\varphi}M$} is defined in the following way: for any tangent vector $v\in U_{\varphi}M$, its Legendre transform $\mathscr{L}(v)$ is the unique covector $\alpha\in U_{\varphi}^*M$ such that $\alpha(v)=1$. The Legendre transform $\mathscr{L}$  conjugates the  geodesic flow and its cotangent version. In this paper we abuse the notation and use $g_t^{\varphi}$ to denote both geodesic flows.

\subsection{Reeb flows on contact manifolds}
Let $\lambda$ be a 1-form on a $(2n-1)$-dimensional orientable manifold $Y$. $\lambda$ is called a \textit{contact form} if $\lambda\wedge(d\lambda)^{n-1}$ does not vanish. A smooth manifold $Y$ with a contact form $\lambda$ is called a \textit{contact manifold}. The \textit{Reeb vector field} $R_{\lambda}$ is defined as the unique vector field satisfying:
$$\lambda(R_{\lambda})=1, d\lambda(R_{\lambda}, \cdot)=0.$$
The flow on $(Y,\lambda)$ generated by $R_{\lambda}$ is called the \textit{Reeb flow on $(Y,\lambda)$}.

Let $\Phi^t_H$ be a Hamitonian flow on $(\Omega, \omega)$ and let $H^{-1}(h)$ be a regular level set of contact type with $1$-form $\lambda$. By definition $(H^{-1}(h), \lambda)$ is a contact manifold and the correponding Reeb flow is a smooth time-change of the restriction of $\Phi^t_H$ to $H^{-1}(h)$. In particular, for any Finsler manifold $(M,\varphi)$, $U^*_{\varphi}M$ is a contact manifold with  $\lambda=\sum p_idq_i$. The Reeb flow on $(U^*_{\varphi}M,\lambda)$ coincides with the geodesic flow $g_t^{\varphi}$. 

When dim $Y=3$, a remarkable result from Cristofaro-Gardiner, Hutchings and Ramos \cite{CHR15} shows that the volume of $Y$ is determined by its embedded contact homology (ECH). By applying this result to perturbation of Reeb flows, Irie \cite{I15}(see also \cite{AI16}) proved the following lemma:
\begin{lemma}(\cite{AI16} Lemma 2.1)\label{echlem}
Let $(Y,\lambda)$ be a closed contact three-manifold. For any $h\in C^{\infty}(Y,\mathbb{R}_{\geq 0})\backslash\{0\}$, there exist $t\in [0,1]$ and $\gamma\in\mathcal{P}(Y,(1+th)\lambda)$ which intersects supp $ h$.
\end{lemma}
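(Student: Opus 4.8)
The plan is to reproduce Irie's argument (as used in \cite{AI16}), which derives the lemma from two ingredients: the monotonicity and spectrality of the ECH spectral invariants $c_k(Y,\cdot)$, and the volume asymptotics of Cristofaro-Gardiner--Hutchings--Ramos \cite{CHR15}, namely $\lim_{k\to\infty}c_k(Y,\lambda')^2/k=2\,\mathrm{vol}(Y,\lambda')$ with $\mathrm{vol}(Y,\lambda'):=\int_Y\lambda'\wedge d\lambda'$. Put $\lambda_t:=(1+th)\lambda$ for $t\in[0,1]$; since $h\geq0$ each $\lambda_t$ is again a contact form, and because $\lambda\wedge dh\wedge\lambda=0$ one computes $\lambda_t\wedge d\lambda_t=(1+th)^2\,\lambda\wedge d\lambda$, so
$$\mathrm{vol}(Y,\lambda_t)=\int_Y(1+th)^2\,\lambda\wedge d\lambda$$
has derivative $\int_Y 2(1+th)h\,\lambda\wedge d\lambda>0$ (using $h\geq0$, $h\not\equiv0$, and that $\lambda\wedge d\lambda$ is a volume form). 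Thus $\mathrm{vol}(Y,\lambda_t)$ is \emph{strictly} increasing in $t$; in particular $\mathrm{vol}(Y,\lambda_1)>\mathrm{vol}(Y,\lambda_0)$.

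Now suppose, for contradiction, that for every $t\in[0,1]$ no $\gamma\in\mathcal{P}(Y,\lambda_t)$ meets $\operatorname{supp}h$. The case $t=0$ shows that every periodic Reeb orbit of $\lambda$ is contained in the open set $W:=Y\setminus\operatorname{supp}h$, on which $h$ and $dh$ vanish identically, hence on which $\lambda_t\equiv\lambda$ and $R_{\lambda_t}\equiv R_\lambda$; the same applies to every $\lambda_t$. Consequently all the contact forms $\lambda_t$ have exactly the same periodic Reeb orbits, and each such orbit $\gamma$ has $t$-independent symplectic action $\int_\gamma\lambda_t=\int_\gamma\lambda$.

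The crucial step is to promote this to the claim that $c_k(Y,\lambda_t)$ is independent of $t$ for every $k$. Here I would invoke the properties of the ECH spectral invariants established by Hutchings and used in \cite{AI16}: $c_k(Y,\cdot)$ is monotone and $C^0$-continuous in the contact form, and for a nondegenerate contact form $c_k(Y,\lambda')$ is the symplectic action of some ECH generator of $\lambda'$ (a finite union of periodic Reeb orbits with multiplicities). Granting these, the fact that all the $\lambda_t$ share the same periodic orbits --- with the same actions --- forces the relevant ECH generators, and hence the values $c_k(Y,\lambda_t)$, not to move with $t$; the degenerate parameter values are absorbed by $C^0$-continuity together with a perturbation argument. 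Feeding this into the Cristofaro-Gardiner--Hutchings--Ramos formula makes $\mathrm{vol}(Y,\lambda_t)=\tfrac12\lim_k c_k(Y,\lambda_t)^2/k$ independent of $t$, contradicting the strict monotonicity from the first paragraph. Hence the contrary assumption is untenable, which is the assertion of the lemma.

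I expect the entire difficulty to sit in the third paragraph: turning ``the spectral invariants are carried by periodic orbits, and here those orbits do not feel the perturbation $h$'' into a rigorous identity requires the full quantitative embedded contact homology package --- the construction, $C^0$-continuity and monotonicity of the $c_k$ (the latter via Hutchings--Taubes cobordism maps), their spectrality for nondegenerate forms, the bookkeeping needed to pass through nondegenerate perturbations, and above all the volume law of \cite{CHR15}. I would import all of this rather than reprove any of it; with it in hand, the volume computation and the purely formal contradiction scheme above are routine.
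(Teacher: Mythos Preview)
The paper does not prove this lemma; it is quoted from \cite{AI16} and used as a black box. Your sketch is precisely the Asaoka--Irie argument and is correct in outline.

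One point in your third paragraph deserves sharpening. Saying that the common orbit set ``forces the relevant ECH generators \ldots\ not to move with $t$'' is not quite the mechanism: the contact form does change on $\operatorname{supp}h$, so the holomorphic-curve counts entering the ECH differential could in principle change, and with them the particular generator realizing $c_k$. The actual reason $c_k(Y,\lambda_t)$ is constant is that the continuous function $t\mapsto c_k(Y,\lambda_t)$ takes values in the \emph{fixed} action spectrum $\mathcal{S}\subset\mathbb{R}$, and $\mathcal{S}$ is a closed set of Lebesgue measure zero (this holds for any contact form on a closed three-manifold and is established separately in \cite{I15}); hence the image, being a connected subset of a totally disconnected set, is a single point. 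You already invoke spectrality and $C^0$-continuity, so you are one explicit appeal to ``the action spectrum has measure zero'' away from a complete argument.
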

\begin{remark}
In \cite{I15} Lemma 4.1, Irie proved a weaker version of local closing lemma: given any open set $U$ in a Riemannian surface $(M,g)$, one can make a conformal perturbation of $g$ so that there exists a closed geodesic in the perturbed metric passing through $U$. The argument is a direct application of the above lemma by taking $h\in C^{\infty}(M,\mathbb{R}_{\geq 0})\backslash\{0\}$ supported on $U$.

However, unlike Theorem \ref{locclslem}, this result does not have any control in the direction of the closed geodesic. It is tempting to try and prove Theorem \ref{locclslem} by taking a $C^{\infty}$-small function $h$ supported on an open set in $U_{\varphi} M$ and carrying Irie's argument. However in this case the function $(1+h)\varphi$ is no longer a norm on $TM$ hence does not generate a Reeb flow. Therefore the proof of Theorem \ref{locclslem} is not a direct generalization of Irie's argument. In order to construct the perturbed Finsler metric we need the tools in Section 3.
\end{remark}

\section{$C^{\infty}$ Perturbation of simple Finsler discs and Hamiltonian flows}
\subsection{Simple Finsler discs and dual lens maps} 
A Finsler metric $\varphi$ on an $n$-dimensional disc $D$ is called \textit{simple} if it satisfies the following three conditions:

(1) Every pair of points in $D$ is connected by a unique geodesic.

(2) Geodesics depend smoothly on their endpoints.

(3) The boundary is strictly convex, that is, geodesics never touch it at their interior points.

It was proven by Whitehead \cite{W32} (see also Bao-Chern-Shen \cite{BCS00}) that any Finsler manifold is locally simple. In particular, when $M$ is compact, we have

\begin{lemma}\label{radlem}
For any point $x$ in a compact Finsler manifold $(M,\varphi)$, there exists $\rho>0$ such that for any $x\in M, r\leq \rho$, $(\overline{B^{\pm}_{\varphi}(x,r)}, \varphi)$ are simple. 
\end{lemma}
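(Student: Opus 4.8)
The plan is to obtain the uniform radius $\rho$ by combining the pointwise local simplicity of Whitehead--Bao--Chern--Shen (\cite{W32}, \cite{BCS00}) with the compactness of $M$. The only genuine issue is that, a priori, the radius supplied by Whitehead's theorem could shrink to $0$ as the base point varies; ruling this out is exactly where compactness of $M$ is used.

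First I would record the pointwise statement in the form needed. Fix $x\in M$. By local simplicity, $x$ has a simple neighbourhood $W$; for all sufficiently small $r>0$ the closed metric ball $\overline{B^{+}_{\varphi}(x,r)}$ lies in $W$, is diffeomorphic to a disc via the exponential map of $\varphi$, and is \emph{simple}. Indeed, conditions (1) and (2) are inherited from $W$ once one knows $\overline{B^{+}_{\varphi}(x,r)}$ is geodesically convex, which for small $r$ is the convexity part of the Whitehead--Bao--Chern--Shen result; and condition (3), strict convexity of the geodesic sphere $\{y: d(x,y)=r\}$, holds for small $r$ because in $\varphi$-normal coordinates at $x$ the metric is $C^{\infty}$-close to the constant Minkowski norm $\varphi_x$ on $T_xM$, whose metric spheres are strictly convex by the quadratic convexity built into the definition of a Finsler metric, and strict convexity is stable under $C^{2}$-small perturbation. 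The same applies to $\overline{B^{-}_{\varphi}(x,r)}$, either directly or by applying the forward statement to the reverse Finsler metric $v\mapsto\varphi(-v)$. Hence $\rho_x:=\sup\{r>0:\ \overline{B^{\pm}_{\varphi}(x,r)}\ \text{are simple}\}$ is strictly positive for every $x\in M$.

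Next I would upgrade this to uniformity in $x$. The key point is that the three defining conditions are open conditions, stable under small perturbations of the centre: if $r<\rho_{x_0}$, then $\overline{B^{\pm}_{\varphi}(x_0,r)}$ is simple, and for $x$ close to $x_0$ the disc $\overline{B^{\pm}_{\varphi}(x,r)}$ is $C^{\infty}$-close to it and sits in a common simple neighbourhood, hence is again simple; so $\rho_x\ge r$ for all $x$ near $x_0$, i.e. $x\mapsto\rho_x$ is lower semicontinuous on $M$. Since $M$ is compact and $\rho_x>0$ everywhere, $\rho_{\min}:=\inf_{x\in M}\rho_x>0$, and one may take $\rho:=\tfrac{1}{2}\rho_{\min}$. (Equivalently, one can argue by a finite subcover: cover $M$ by the open sets $B^{+}_{\varphi}(x,\rho_x/3)\cap B^{-}_{\varphi}(x,\rho_x/3)$, extract a finite subcover indexed by $x_1,\dots,x_k$, set $\rho=\tfrac{1}{3}\min_i\rho_{x_i}$, and use the triangle inequality to place $\overline{B^{\pm}_{\varphi}(x,r)}$ inside the simple disc $\overline{B^{\pm}_{\varphi}(x_i,\rho_{x_i})}$ whenever $r\le\rho$; one still invokes stability to conclude the sub-disc is itself simple.)

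I expect the main obstacle to be precisely this stability/lower-semicontinuity step -- in particular, checking that the strict convexity of the small geodesic spheres $\{d(x,\cdot)=r\}$ holds with a radius that is locally uniform in the base point, so that it does not degenerate along a sequence of centres. This is implicit in Whitehead's construction and in the Bao--Chern--Shen treatment of convex neighbourhoods, which is why the lemma can be presented as an ``in particular'' consequence for compact $M$; the remaining compactness bookkeeping is routine.
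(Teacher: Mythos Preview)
Your proposal is correct and, in fact, supplies considerably more detail than the paper itself: the paper does not give a proof of this lemma at all, merely stating it as an immediate consequence of Whitehead's local simplicity result \cite{W32}, \cite{BCS00} together with compactness of $M$. Your lower-semicontinuity/finite-subcover argument is exactly the routine ``compactness bookkeeping'' the paper leaves implicit, so there is nothing to compare.
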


Once $(D,\varphi)$ is simple, denote by $U_{in}, U_{out}$ the set of inward, outward pointing unit tangent vectors with base points in $\partial D$ respectively. We consider subsets $U^*_{in}=\mathscr{L}(U_{in})$ and $U^*_{out}=\mathscr{L}(U_{out})$ of $U^*_{\varphi}D$, where $\mathscr{L}$ is the Legendre transform defined in Section 2.3.

For any $\alpha\in U^*_{in}$, denote by $\sigma(\alpha)$ the first intersection of $U^*_{out}$ with the forward $g_t^{\varphi}$ orbit of $\alpha$. This defines a map $\sigma: U^*_{in}\rightarrow U^*_{out}$, called \textit{the dual lens map of $\varphi$}. If $\varphi$ is reversible, then the dual lens map is reversible, namely, $-\sigma(-\sigma(\alpha))=\alpha$ for every $\alpha\in U^*_{in}$. Note that $U^*_{in}$ and $U^*_{out}$ are $(2n-2)$-dimensional submanifolds of $T^*D$. The restriction of the canonical symplectic 2-form of $T^*D$ to $U^*_{in}$ and $U^*_{out}$ determines the symplectic structure. Moreover, the dual lens map $\sigma$ is symplectic. 
\subsection{Perturbation of simple metrics on $D^2$}

\begin{definition}
For any symplectic map $\sigma: U^*_{in}\to U^*_{out}$ we  define two maps $P_{\sigma}: U^*_{in}\rightarrow S\times S$ and $Q_{\sigma}: U^*_{out}\rightarrow S\times S$ by 
$$P_{\sigma}(\alpha)=(\pi(\alpha), \pi(\sigma(\alpha)))$$
and
$$Q_{\sigma}(\beta)=(\pi(\sigma^{-1}(\beta)), \pi(\beta)),$$
here $\pi: T^*D\rightarrow D$ is the bundle projection and $S:=\partial D$. 
\end{definition}
\begin{remark}
Note that $P_{\sigma}=Q_{\sigma}\circ\sigma$ and both maps are bijection. The map $P^{-1}_{\sigma}$ ($Q^{-1}_{\sigma}$ resp.) takes two different points on the boundary $S$ and reports the inwards (outwards resp.) covector of the geodesic connecting these two points.
\end{remark}
\begin{definition}
Let $\Delta:=\{(x,x)\in S\times S: x\in S\}$. We define a 1-form $\lambda_{\sigma}$ on $S\times S\backslash\Delta$ as follows. For $p,q\in S, p\neq q, \xi\in T_p S, \eta\in T_q S$, define
$$\lambda_{\sigma}(\xi,\eta):=-P^{-1}_{\sigma}(p,q)(\xi)+Q^{-1}_{\sigma}(p,q)(\eta).$$
\end{definition}

In \cite{BI16}, the following theorem is proven, saying that under certain natural restrictions, a symplectic perturbation of $\sigma$ is the dual lens map of some Finsler metric closed to $\varphi$:

\begin{theorem}[\cite{BI16}, Remark 4.4]\label{prop1}
Let $\sigma$ be the dual lens map of a simple $C^{\infty}$ Finsler metric $\varphi$ on $D^2$. Let $W$ be the complement of a compact set in $U^*_{in}$ and  $\tilde{\sigma}$ is a symplecitc perturbation of $\sigma$ with $\tilde{\sigma}|_W=\sigma|_W$. Then $\tilde{\sigma}$ is the dual lens map of a simple $C^{\infty}$ Finsler metric $\tilde{\varphi}$ which coincides with $\varphi$ in some neighborhood of $\partial D$ if and only if
$$\int_{\{p\}\times\{S\backslash p\}}\lambda_{\tilde{\sigma}}=0 $$
for some (and then all) $p\in S$. The choice of $\tilde{\varphi}$ can be made in such a way that $\tilde{\varphi}$ converges to $\varphi$ in $C^{\infty}$ whenever $\tilde{\sigma}$ converges to $\sigma$ in $C^{\infty}$. In addition, if $\varphi$ is a reversible Finsler metric and $\tilde{\sigma}$ is reversible then $\tilde{\varphi}$ can be chosen reversible as well.
\end{theorem}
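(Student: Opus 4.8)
The plan is to recognise $\lambda_{\sigma}$ as the differential of the boundary distance function of $\varphi$, and thereby reduce the statement to a realisation theorem for ``consistent'' lens data. The first step is to set, for $(p,q)\in(S\times S)\setminus\Delta$, $b_{\varphi}(p,q):=d_{\varphi}(p,q)$, the length of the unique $\varphi$-geodesic joining $p$ to $q$; since $\varphi$ is simple this is smooth, positive, and extends continuously by $0$ to $\Delta$. Using the first variation of arc length together with the characterisation of the Legendre transform $\mathscr{L}$ as the supporting covector, one checks that $d_{p}b_{\varphi}=-P^{-1}_{\sigma}(p,q)$ and $d_{q}b_{\varphi}=Q^{-1}_{\sigma}(p,q)$, that is $db_{\varphi}=\lambda_{\sigma}$ on $(S\times S)\setminus\Delta$. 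In particular $\lambda_{\sigma}$ is exact, and for $\sigma$ itself the integral condition in the statement merely records the continuity $b_{\varphi}\to 0$ at $\Delta$, since by the fundamental theorem of calculus $\int_{\{p\}\times(S\setminus p)}db_{\varphi}$ is the difference of the two one-sided limits of $b_{\varphi}$ as $q\to p$. The theorem is the assertion that this picture persists after a compactly supported symplectic perturbation.

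Necessity (``only if'') is then immediate: if $\tilde{\varphi}$ exists and agrees with $\varphi$ near $\partial D$, then $\lambda_{\tilde\sigma}=db_{\tilde\varphi}$ with $b_{\tilde\varphi}$ continuous on all of $S\times S$ and vanishing on $\Delta$, so the integral over $\{p\}\times(S\setminus p)$ is $0$. For the converse I would first produce a candidate $\tilde b$ with $d\tilde b=\lambda_{\tilde\sigma}$. Closedness of $\lambda_{\tilde\sigma}$ comes from the symplectic hypothesis: pulling back by $P_{\tilde\sigma}$ gives $P_{\tilde\sigma}^{*}\lambda_{\tilde\sigma}=\tilde\sigma^{*}\theta_{out}-\theta_{in}$ with $\theta$ the tautological $1$-form, so that $d(P_{\tilde\sigma}^{*}\lambda_{\tilde\sigma})$ is a multiple of $\omega_{in}-\tilde\sigma^{*}\omega_{out}$, which vanishes because $\tilde\sigma$ is symplectic. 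Next, since $\tilde\sigma=\sigma$ near the grazing locus (the compact-support hypothesis), and covectors near the grazing locus are carried by $P_{\sigma}$ to pairs $(p,q)$ near $\Delta$, we get $\lambda_{\tilde\sigma}=\lambda_{\sigma}$ in a neighbourhood of $\Delta$. As $(S\times S)\setminus\Delta$ is an open annulus whose $H^{1}$ is generated by a loop that may be isotoped arbitrarily close to $\Delta$ --- where $\lambda_{\tilde\sigma}=\lambda_{\sigma}=db_{\varphi}$ has vanishing period --- the form $\lambda_{\tilde\sigma}$ is exact, $\lambda_{\tilde\sigma}=d\tilde b$ with $\tilde b$ unique up to a constant. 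On each of the two ends of the annulus $\tilde b=b_{\varphi}+c_{\pm}$, and the hypothesis $\int_{\{p\}\times(S\setminus p)}\lambda_{\tilde\sigma}=0$ forces $c_{+}=c_{-}$; normalising the free constant then gives $\tilde b\equiv b_{\varphi}$ near $\Delta$, so $\tilde b$ extends continuously by $0$ to $\Delta$ and is a legitimate candidate boundary distance function, equal to the old one near $\partial D$.

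It remains to realise $\tilde b$ (equivalently $\tilde\sigma$) by an honest simple Finsler metric, and this is the step I expect to be the main obstacle, being the technical heart of \cite{BI16}. The new oriented geodesics should be the arcs $\gamma_{p,q}$ obtained by flowing $P^{-1}_{\tilde\sigma}(p,q)$ --- a $2$-parameter family of curves in $D^{2}$ coinciding with the $\varphi$-geodesics near $\partial D$ --- and the goal is to build a fibrewise strictly convex hypersurface $\Sigma\subset T^{*}D$ (a dual unit cosphere bundle) that agrees with $U^{*}_{\varphi}D$ over a neighbourhood of $\partial D$ and whose geodesic (Reeb) flow has lens map $\tilde\sigma$, after which $\tilde\varphi$ is taken to be the Finsler metric Legendre-dual to $\Sigma$. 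The construction of $\Sigma$ interpolates between the boundary data and the interior scattering data; two-dimensionality of $D$ keeps the underlying geodesic framework under control, and the exactness established above --- $\lambda_{\tilde\sigma}=d\tilde b$ with $\tilde b|_{\Delta}=0$ --- is precisely the compatibility that makes the interpolation close up without monodromy, a Moser-type argument upgrading ``same lens data'' to ``the metric''. Since the construction is tame in its input and reduces to $\tilde\varphi=\varphi$ when $\tilde\sigma=\sigma$, one obtains $\tilde\varphi\to\varphi$ in $C^{\infty}$ as $\tilde\sigma\to\sigma$ in $C^{\infty}$; and since every step can be carried out $(v\mapsto -v)$-equivariantly, $\tilde\varphi$ may be chosen reversible whenever $\varphi$ and $\tilde\sigma$ are. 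Everything preceding the realisation step is the first-variation identity plus cohomological bookkeeping on the annulus; reconstructing a genuine smooth (reversible) Finsler metric from abstract consistent lens data is where the real work lies.
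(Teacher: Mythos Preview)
The paper does not prove this theorem: it is quoted verbatim from \cite{BI16}, Remark 4.4, and then used as a black box to derive Proposition \ref{maincor}. There is therefore no proof in the present paper to compare your proposal against.

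That said, your sketch is a correct outline of how the result is actually obtained in \cite{BI16}. The identification $\lambda_{\sigma}=db_{\varphi}$ via the first variation of arc length, the closedness of $\lambda_{\tilde\sigma}$ from $P_{\tilde\sigma}^{*}\lambda_{\tilde\sigma}=\tilde\sigma^{*}\theta_{out}-\theta_{in}$ together with the symplectic hypothesis, and the reading of the integral condition as the matching $c_{+}=c_{-}$ of the two boundary constants on the annulus $(S\times S)\setminus\Delta$ are all accurate. You also correctly isolate the realisation step --- building a genuine simple Finsler metric with prescribed lens data --- as the substantive content, and you do not pretend to carry it out here. So your proposal is a faithful roadmap to the argument in \cite{BI16}, but since this paper simply cites that result, there is nothing further to compare.
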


\begin{proposition}\label{maincor}
Let $\sigma$ be the dual lens map of a simple $C^{\infty}$ Finsler metric on $D^2$. Let  $U\subseteq U^*_{in}$ be a sufficiently small open neighborhood of $\alpha\in U^*_{in}$. Given any symplectic perturbation $\tilde{\sigma}$ of $\sigma$ with $\tilde{\sigma}=\sigma$ on $U^*_{in}\backslash U$, there exists a simple $C^{\infty}$ Finsler metric $\tilde{\varphi}$ whose  dual lens map is $\tilde{\sigma}$. Convergence and reversible cases are the same as those in Theorem \ref{prop1}.
\end{proposition}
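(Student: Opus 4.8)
The plan is to deduce $\tilde\varphi$ directly from Theorem \ref{prop1}, so that the entire task reduces to checking that Theorem \ref{prop1} applies to the given $\tilde\sigma$. Two items must be verified: that $\tilde\sigma$ agrees with $\sigma$ off a compact subset of $U^*_{in}$, and that $\int_{\{p\}\times(S\backslash p)}\lambda_{\tilde\sigma}=0$ for some $p\in S$. The first is immediate: since $U$ is a sufficiently small neighbourhood of $\alpha$ we may take it relatively compact, and then $W:=U^*_{in}\backslash\overline{U}$ is the complement of a compact set with $\tilde\sigma|_W=\sigma|_W$.

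For the integral I would exploit the locality of the perturbation. Let $\pi$ denote the bundle projection, so that $\pi(U)$ is a short, in particular proper, open subarc of $S=\partial D$. I claim that for every $p\in S\backslash\overline{\pi(U)}$ and every $q\in S\backslash\{p\}$ one has $Q^{-1}_{\tilde\sigma}(p,q)=Q^{-1}_{\sigma}(p,q)$. To see this, put $\beta:=Q^{-1}_{\tilde\sigma}(p,q)$ and $\gamma:=\tilde\sigma^{-1}(\beta)$; by definition of $Q_{\tilde\sigma}$ we have $\pi(\gamma)=p\notin\pi(U)$, hence $\gamma\notin U$ and so $\tilde\sigma(\gamma)=\sigma(\gamma)=\beta$. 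Then $\pi(\sigma^{-1}(\beta))=\pi(\gamma)=p$ and $\pi(\beta)=q$, which is exactly the characterization of $Q^{-1}_{\sigma}(p,q)$. Since the restriction of $\lambda_{\tilde\sigma}$ (resp. $\lambda_{\sigma}$) to the curve $\{p\}\times(S\backslash p)$ is the $1$-form $\eta\mapsto Q^{-1}_{\tilde\sigma}(p,q)(\eta)$ (resp. $\eta\mapsto Q^{-1}_{\sigma}(p,q)(\eta)$), the claim yields $\lambda_{\tilde\sigma}=\lambda_{\sigma}$ along $\{p\}\times(S\backslash p)$ for all $p\notin\overline{\pi(U)}$.

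To conclude, note that $\sigma$ is itself the dual lens map of the given simple metric $\varphi$ on $D^2$, so applying Theorem \ref{prop1} to the trivial perturbation $\tilde\sigma=\sigma$ (with $\tilde\varphi=\varphi$) forces $\int_{\{p\}\times(S\backslash p)}\lambda_{\sigma}=0$ for every $p$. Picking $p\in S\backslash\overline{\pi(U)}$, which is possible because $\overline{\pi(U)}$ is a proper closed subarc of $S$, the previous paragraph gives
$$\int_{\{p\}\times(S\backslash p)}\lambda_{\tilde\sigma}=\int_{\{p\}\times(S\backslash p)}\lambda_{\sigma}=0 .$$
Theorem \ref{prop1} then furnishes a simple $C^{\infty}$ Finsler metric $\tilde\varphi$, coinciding with $\varphi$ near $\partial D$, whose dual lens map is $\tilde\sigma$; the $C^{\infty}$-convergence statement and the reversible case are inherited verbatim from Theorem \ref{prop1}, which proves Proposition \ref{maincor}.

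I do not expect a genuine obstacle, since essentially all of the analytic work is already packaged in Theorem \ref{prop1}. The one place that needs care is the middle paragraph: making precise that a compactly supported symplectic modification of the lens map leaves every connecting covector $Q^{-1}(p,q)$ with base point $p$ outside $\pi(U)$ unchanged, together with the harmless reductions that $U$ may be taken relatively compact and that $\pi(U)$ is a proper subarc.
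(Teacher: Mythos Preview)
Your proposal is correct and follows essentially the same approach as the paper: pick $p\in S\setminus\overline{\pi(U)}$, observe that the covectors entering the definition of $\lambda_{\tilde\sigma}$ along $\{p\}\times(S\setminus p)$ are unaffected by the perturbation, and invoke Theorem \ref{prop1} twice (once to know $\int\lambda_\sigma=0$, once to produce $\tilde\varphi$). Your treatment is in fact a bit cleaner than the paper's, since you note that only the $Q^{-1}$ term survives on the integration curve (the $\xi$-component of tangent vectors to $\{p\}\times(S\setminus p)$ vanishes), so you need not discuss $P^{-1}$ at all.
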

\begin{proof}
Take $p\in \partial D^2\backslash\pi(U)$ and denote by $(U^*_{in})_p:=\pi^{-1}(p)\cap U^*_{in}$. 
Notice that on $(U^*_{in})_p$ we have $\sigma=\tilde{\sigma}$ hence $P_{\sigma}=P_{\tilde{\sigma}}$. Thus $P^{-1}_{\sigma}(p,\cdot)=P^{-1}_{\tilde{\sigma}}(p,\cdot)$. Similarly we have $Q^{-1}_{\sigma}(p,\cdot)=Q^{-1}_{\tilde{\sigma}}(p,\cdot)$, therefore $\lambda_{\sigma}(\xi, \cdot)=\lambda_{\tilde{\sigma}}(\xi, \cdot)$ for all $\xi\in (U^*_{in})_p$. By Theorem \ref{prop1},
$$\int_{\{p\}\times\{S\backslash p\}}\lambda_{\tilde{\sigma}}=\int_{\{p\}\times\{S\backslash p\}}\lambda_{{\sigma}}=0.$$
By applying Theorem \ref{prop1} again we finish the proof.
\end{proof}
\begin{remark}
Proposition \ref{maincor} remains valid for higher dimensions. It is an easy corrollary of Theorem 2 in \cite{BI16}.
\end{remark}
\begin{remark}\label{localpert}
The perturbation in Proposition \ref{maincor} is a local perturbation. Namely, the perturbation of $\tilde{\varphi}-\varphi$ can be confined to an arbitrarily small open neighborhood  of $\alpha$, if the support of $\tilde{\sigma}-\sigma$ is sufficiently small.
\end{remark}

\subsection{Local perturbation of Hamiltonian flows}
In order to make local perturbation of $\Phi_H^t$ we need the following classical result (see \cite{AM87} Theorem 5.2.19):
\begin{theorem}[Hamiltonian flow box theorem]\label{flowbox}
Let $(\Omega, \omega)$ be a symplectic manifold of dimension $\dim \Omega=2n$. For any point $x\in\Omega$ and any Hamiltonian function $H$ on $\Omega$ with $X_H(x)\neq 0$, one can find an open neighborhood $U$ of $x$ and coordinates $(q_1,  ..., q_n, p_1,..., p_n)$ in $U$ such that 
$$\omega=\sum_{i=1}^n dq_{i}\wedge dp_i, H|_U=p_n.$$
\end{theorem}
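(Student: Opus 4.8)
The plan is to construct the coordinates $(q_1,\dots,q_n,p_1,\dots,p_n)$ directly as functions near $x$ whose pairwise Poisson brackets $\{f,g\}:=\omega(X_f,X_g)$ are the canonical ones, with $p_n=H$, rather than invoking Darboux's theorem and then correcting it. Note first that $X_H(x)\neq 0$ is equivalent to $dH(x)\neq 0$, since $\omega$ is nondegenerate. Set $p_n:=H$. Pick a smooth hypersurface $\Sigma$ through $x$ transverse to $X_H$ and define $q_n$ on a neighborhood of $x$ by $q_n\equiv 0$ on $\Sigma$ and $q_n(\Phi^t_H(y))=-t$ for $y\in\Sigma$ and $t$ small; then $X_H q_n\equiv -1$, so, with the sign conventions fixed appropriately, $\{q_n,p_n\}\equiv 1$. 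Since $dH(X_H)=0$ but $dq_n(X_H)=1$, the forms $dq_n$ and $dH$ are independent at $x$, so $N:=\{q_n=0\}\cap H^{-1}(H(x))$ is a codimension-$2$ submanifold through $x$.

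Next, $\omega|_N$ is nondegenerate near $x$, so $(N,\omega|_N)$ is symplectic there. Indeed $\ker dq_n=(X_{q_n})^{\perp_\omega}$ and $\ker dH=(X_H)^{\perp_\omega}$, so $T_xN=\ker dq_n\cap\ker dH$ has $\omega$-orthogonal complement $\mathbb{R}X_{q_n}+\mathbb{R}X_H$; since $dH(X_{q_n})\neq 0$ and $dq_n(X_H)\neq 0$, no nonzero element $aX_{q_n}+bX_H$ of that plane lies in $\ker dq_n\cap\ker dH$, whence $T_xN\cap(T_xN)^{\perp_\omega}=0$. Apply the classical Darboux theorem to $(N,\omega|_N)$ to obtain functions $q_1,\dots,q_{n-1},p_1,\dots,p_{n-1}$ on a neighborhood of $x$ in $N$, vanishing at $x$, with $\omega|_N=\sum_{i=1}^{n-1}dq_i\wedge dp_i$.

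Now extend $q_1,\dots,q_{n-1},p_1,\dots,p_{n-1}$ to a neighborhood of $x$ in $\Omega$ by requiring each to be constant along the flows of both $X_H$ and $X_{q_n}$. This is legitimate: $[X_{q_n},X_H]=X_{\{q_n,H\}}=X_{\pm 1}=0$, so the flows commute; $N$ is transverse to both vector fields (the sum $T_xN\oplus\mathbb{R}X_H\oplus\mathbb{R}X_{q_n}$ is direct of dimension $2n$); and $(t,s,y)\mapsto\Phi^t_H\Phi^s_{q_n}(y)$, $y\in N$, is a local diffeomorphism near $(0,0,x)$. By construction $\{q_i,p_n\}=\{q_i,q_n\}=\{p_i,p_n\}=\{p_i,q_n\}=0$ for $i\le n-1$, and the remaining brackets among $q_1,\dots,q_{n-1},p_1,\dots,p_{n-1}$ propagate off $N$ via the Jacobi identity: e.g. $X_H\{q_i,q_j\}=\{\{q_i,q_j\},H\}=\{\{q_i,H\},q_j\}+\{q_i,\{q_j,H\}\}=0$ and similarly $X_{q_n}\{q_i,q_j\}=0$, so $\{q_i,q_j\}$ is constant along the defining flows and equals its value on $N$, namely the Darboux value. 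Hence $\{q_i,q_j\}=\{p_i,p_j\}=0$ and $\{q_i,p_j\}=\delta_{ij}$ throughout the neighborhood. Finally, the canonical bracket relations force $dq_1,\dots,dq_n,dp_1,\dots,dp_n$ to be independent at $x$ — pairing a vanishing linear combination with $X_{q_j}$ and $X_{p_j}$ recovers each coefficient up to sign — so these $2n$ functions are coordinates, canonical Poisson brackets are equivalent to $\omega=\sum_{i=1}^n dq_i\wedge dp_i$, and $p_n=H$ by definition.

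The step I expect to need the most care is the bookkeeping around $N$: checking that $N$ is symplectic near $x$ so Darboux applies there, and that every required bracket identity propagates off $N$ using the commuting flows and the Jacobi identity; once that is done, the only remaining ingredient is the standard fact that canonical Poisson brackets characterize the standard symplectic form. An alternative route reduces to $(\mathbb{R}^{2n},\omega_{\mathrm{std}})$ by Darboux, straightens $H$ to the linear coordinate $p_n$ by a (non-symplectic) diffeomorphism, and then runs a Moser deformation argument on the resulting family of symplectic forms; the subtlety there is that the homotopy and the generated vector fields must be chosen so as to fix $p_n$ pointwise, i.e. one needs Moser's trick relative to the level sets of $p_n$.
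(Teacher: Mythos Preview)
The paper does not prove this theorem: it quotes it as a classical result and cites Abraham--Marsden (Theorem 5.2.19 in \cite{AM87}), so there is no in-paper argument to compare against. Your sketch is a correct rendition of the standard proof---build a conjugate pair $(q_n,p_n)=(q_n,H)$ from the flow, check that the codimension-two slice $N$ is symplectic, pull Darboux coordinates off $N$ along the commuting flows of $X_H$ and $X_{q_n}$, and propagate the bracket relations by Jacobi. The only cosmetic point to tidy is the sign in your definition of $q_n$: with the paper's convention $\omega(X_H,\cdot)=dH$ one has $\{f,g\}=X_g f$, so $\{q_n,p_n\}=X_H q_n$, and to get $+1$ you want $q_n(\Phi^t_H(y))=t$ rather than $-t$; you already flagged this as a convention issue, so it is not a gap.
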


Namely, under this coordinates the flow in $U$ is given by 
$$\dot{p}_1=\cdots =\dot{p}_n= \dot{q}_1=\cdots =\dot{q}_{n-1}=0, \dot{q}_{n}=1.$$

It is clear that the condition $X_H(x)\neq 0$ can be fulfilled once we assume $h=H(x)$ is a regular value. Now we fix a regular value $h$. For any $x\in H^{-1}(h)$ and $T>0$, let $\Sigma_0, \Sigma_T\subseteq H^{-1}(h)$ be two transversal sections containing $x$ and $\Phi^T_H(x)$ respectively. We denote by $\mathscr{P}_T: \Sigma_0\to \Sigma_T$  the Poincar\'{e} map between these sections with respect to $\Phi^t_H$. Similar to the dual lens map, the Poincar\'{e} map $\mathscr{P}_T$ is symplectic with respect to the restriction of $\omega$ on $\Sigma_0$ and $\Sigma_T$. The following proposition shows that any local perturbation of $\mathscr{P}_T$ near $x$ can be realized as the Poincar\'{e} map of a perturbed Hamiltonian flow:


\begin{proposition}\label{pertham}
Let $V$ be a sufficiently small open neighborhood of $x$ in $\Sigma_0$ and $\tilde{\mathscr{P}}_T: \Sigma_0\to \Sigma_T$ be a symplectic perturbation of $\mathscr{P}_T$ on $\overline{V}$. There exists perturbation $\tilde{H}$ of $H$ such that $\tilde{\mathscr{P}}_T$ is the Poincar\'{e} map with respect to $\Phi^t_{\tilde{H}}$. The choice of $\tilde{H}$ can be made such that as $\tilde{\mathscr{P}}_T\to\mathscr{P}_T$ in $C^{\infty}$,  $\tilde{H}\to H$ in $C^{\infty}$ and $\tilde{H}-H$ is supported on an arbitrarily small neighborhood of $\Phi^{T/2}_H(x)$ .
\end{proposition}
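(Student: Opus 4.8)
The plan is to confine the whole perturbation to a Hamiltonian flow box around $y:=\Phi^{T/2}_H(x)$ and to realize the prescribed change of the Poincar\'e map there as a small autonomous suspension of a time-dependent Hamiltonian on a $(2n-2)$-dimensional transversal. Since $h$ is a regular value, $X_H(y)\neq 0$, so Theorem~\ref{flowbox} provides coordinates $(q_1,\dots,q_n,p_1,\dots,p_n)$ on a neighborhood $U\ni y$ with $\omega=\sum dq_i\wedge dp_i$, $H|_U=p_n$, $q_n(y)=0$, $p_n(y)=h$, and, after shrinking $U$, a box $\{|q_i|\le a\}\times\{|p_i-p_i(y)|\le b\}\subseteq U$ with $a$ as small as we wish and $U$ disjoint from neighborhoods of $\Sigma_0$ and $\Sigma_T$. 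In these coordinates the $H$-flow is $\dot q_n=1$ with all other coordinates constant. Put $\Sigma':=U\cap\{q_n=-a,\ p_n=h\}$ and $\Sigma'':=U\cap\{q_n=a,\ p_n=h\}$: these are transversal sections inside $H^{-1}(h)$, with coordinates $(q',p'):=(q_1,\dots,q_{n-1},p_1,\dots,p_{n-1})$ and form $\omega_0:=\sum_{i<n}dq_i\wedge dp_i$, and they remain transversal sections for any Hamiltonian agreeing with $H$ near them. Factor $\mathscr P_T=\mathscr P^{(3)}\circ\mathscr P^{(2)}\circ\mathscr P^{(1)}$, with $\mathscr P^{(1)}\colon\Sigma_0\to\Sigma'$, the in-box map $\mathscr P^{(2)}\colon\Sigma'\to\Sigma''$ (which is the identity in the coordinates $(q',p')$), and $\mathscr P^{(3)}\colon\Sigma''\to\Sigma_T$. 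Any perturbation of $H$ supported in the box interior leaves $\mathscr P^{(1)}$ and $\mathscr P^{(3)}$ unchanged, so it suffices to realize $\tilde{\mathscr P}^{(2)}:=(\mathscr P^{(3)})^{-1}\circ\tilde{\mathscr P}_T\circ(\mathscr P^{(1)})^{-1}$ as the in-box Poincar\'e map of the perturbed flow. In the coordinates $(q',p')$ this $\tilde{\mathscr P}^{(2)}$ becomes a symplectomorphism $\phi$ of an open subset of $(\mathbb R^{2n-2},\omega_0)$ --- symplectic since it is a composition of symplectic maps --- equal to the identity off $\mathscr P^{(1)}(\overline V)$, which shrinks to a small compact subset of $\Sigma'$ as $V$ shrinks to $\{x\}$, and $\phi\to\mathrm{id}$ in $C^\infty$ as $\tilde{\mathscr P}_T\to\mathscr P_T$. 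Extend $\phi$ by the identity to all of $\mathbb R^{2n-2}$.

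Next I would use the standard fact that a compactly supported symplectomorphism $\phi$ of $\mathbb R^{2n-2}$ which is $C^\infty$-close to the identity is the time-one map of a compactly supported Hamiltonian isotopy $\psi_s$ ($s\in[0,1]$, $\psi_0=\mathrm{id}$, $\psi_1=\phi$), generated by a time-dependent Hamiltonian $G_s$ whose support lies in a fixed compact subset of $\Sigma'$ and with $\|G_s\|_{C^\infty}\to 0$ as $\phi\to\mathrm{id}$; this is most cleanly obtained from the generating function of the graph of $\phi$ near the diagonal. Fix cutoffs $\chi\colon\mathbb R\to[0,1]$ with $\chi\equiv 0$ near $-a$ and $\chi\equiv 1$ near $a$, and $\beta\colon\mathbb R\to[0,1]$ with $\beta\equiv 1$ near $h$ and $\mathrm{supp}\,\beta$ inside the $p_n$-range of the box, and define $\tilde H:=H$ on $\Omega\setminus U$ and $\tilde H:=p_n+\beta(p_n)\,\chi'(q_n)\,G_{\chi(q_n)}(q',p')$ on $U$. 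Then $\tilde H-H$ is smooth, supported in a compact subset of the box --- hence in an arbitrarily small neighborhood of $y=\Phi^{T/2}_H(x)$ once $a$, the box and $\mathrm{supp}\,G$ are chosen small --- and $\tilde H\to H$ in $C^\infty$ as $\phi\to\mathrm{id}$, i.e.\ as $\tilde{\mathscr P}_T\to\mathscr P_T$.

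Finally I would check that $\tilde{\mathscr P}^{(2)}$ is indeed the in-box Poincar\'e map of $\Phi^t_{\tilde H}$. An orbit of $\Phi^t_{\tilde H}$ issuing from $\Sigma_0\subseteq H^{-1}(h)$ stays on $\tilde H^{-1}(h)$ and reaches $\Sigma'$ with $p_n=h$, because $\tilde H=p_n$ near $\Sigma'$. Inside the box Hamilton's equations read $\dot q_n=1+\beta'(p_n)\chi'(q_n)G_{\chi(q_n)}(q',p')$ and $\dot p_n=-\partial_{q_n}\!\big(\beta(p_n)\chi'(q_n)G_{\chi(q_n)}(q',p')\big)$; a continuity/bootstrap argument using that $\|G\|_{C^1}$ and $a$ are small shows that $p_n$ never leaves the region where $\beta\equiv 1$, so along these orbits $\dot q_n\equiv 1$ and the $(q',p')$-motion is exactly the flow of the $q_n$-dependent Hamiltonian $\chi'(q_n)G_{\chi(q_n)}(q',p')$. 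By the reparametrization rule for Hamiltonian flows, its flow from $q_n=-a$ to $q_n=a$ equals $\psi_1=\phi$ (using $\chi(-a)=0$, $\chi(a)=1$), so the in-box Poincar\'e map of $\Phi^t_{\tilde H}$ is $\tilde{\mathscr P}^{(2)}$; composing with the unchanged $\mathscr P^{(1)}$ and $\mathscr P^{(3)}$ yields $\tilde{\mathscr P}_T$.

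I expect the main obstacle to be the tension between keeping $\tilde H-H$ genuinely compactly supported in the flow box and preserving the unit-speed structure $\dot q_n\equiv 1$ that makes the $q_n$-slices honest time sections: this forces the $p_n$-cutoff $\beta$, and one must argue that the relevant orbits stay in the region $\{\beta\equiv 1\}$, so that the suspension reproduces $\phi$ exactly rather than only approximately --- which is why the construction, and hence the statement, is applied only to $C^\infty$-small perturbations. A secondary, entirely standard point is the passage from a small symplectic perturbation of a Poincar\'e map to a small time-dependent Hamiltonian generating it.
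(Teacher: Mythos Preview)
Your argument is correct and is the standard ``suspension in a flow box'' construction: straighten the flow near $y=\Phi^{T/2}_H(x)$ via Theorem~\ref{flowbox}, write the prescribed change of Poincar\'e map as a compactly supported symplectomorphism $\phi$ of a transversal close to the identity, realize $\phi$ as the time-one map of a small compactly supported Hamiltonian isotopy, and suspend it with the cutoffs $\chi(q_n)$ and $\beta(p_n)$. The bootstrap you flag is genuine but harmless: once the box (hence $a$ and $\chi$) is fixed to fit in the prescribed neighborhood, energy conservation gives $|p_n-h|\le \|\chi'\|_{C^0}\|G\|_{C^0}$ on the set where $\beta\equiv 1$, so for $\tilde{\mathscr P}_T$ close enough to $\mathscr P_T$ the orbit never leaves that set and the in-box Poincar\'e map is exactly $\phi$.

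The paper takes a different route. Instead of suspending a time-dependent Hamiltonian, it observes that the flow-box coordinates can be chosen so that $\Phi^t_H$ is locally conjugate to the Euclidean geodesic flow $\Phi^t_{H^0}$, $H^0=(|\hat{\mathbf p}|^2-1)/2$; the in-box Poincar\'e map then becomes the dual lens map of a Euclidean disc, and the desired symplectic perturbation of it is realized, via Proposition~\ref{maincor} (the Burago--Ivanov result), as the dual lens map of a nearby simple Finsler metric. The perturbed Hamiltonian is $\tilde H^0=((\tilde\varphi^*)^2-1)/2$ in these coordinates. This recycles the lens-map machinery already set up for Theorem~\ref{locclslem} and keeps the Hamiltonian and Finsler stories parallel; your approach is more elementary, avoids invoking~\cite{BI16}, and is the textbook construction one would reach for outside the context of this paper.
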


\begin{proof}
Denote by $\alpha:=\Phi^{T/2}_H(x)$. By Theorem \ref{flowbox}, all Hamiltonian flows are locally conjugate to the Hamiltonian flow $\Phi^t_{p_n}$, thus we can find a small neighborhood $U\subseteq \Omega$ of $\alpha$  and a symplectic coordinate $(\hat{\textbf{q}},\hat{\textbf{p}})$ in $U$ such that the Hamiltonian flow $\Phi^t_H$ in $U$ is conjugate to $\Phi^t_{H^0}$ with $H^0=(\hat{\textbf{p}}^2-1)/2$. In particular, the restriction of $\Phi^t_H$ on $U\cap H^{-1}(0)$ is conjugate to the restriction of  $\Phi^t_{H^0}$ on $\hat{\textbf{p}}^2=1$, which is the geodesic flow with Euclidean metric on the unit cotangent bundle. 

We denote by $\Pi$ the projection in $U$ to its $\hat{\textbf{q}}$ coordinates and $c_x(t)$ the orbit segment between $x$ and $\Phi^t_H(x)$. Since $U$ is open, we can find  $T_0>0$ such that 
$$c_{\alpha}(T_0)\subseteq U\cap (H^0)^{-1}(0).$$
Let $D$ be the Euclidean $n$-disc in $\Pi(U)$ such that $\Pi(c_{\alpha}(T_0))$ is a diameter of $D$. Let $\sigma: U^*_{in}\to U^*_{out}$ be the dual lens map of $D$. By taking $V$ sufficiently small we may assume all orbits starting from $V$ pass through $U^*_{in}$ before time $T$. Hence any $C^{\infty}$ small symplectic perturbation of the Poincar\'{e} map $\mathscr{P}_T$ on $V$ is equivalent to a $C^{\infty}$ symplectic local perturbation of $\sigma$, which can be realized as a local perturbation of the Euclidean metric in $D$ via Proposition \ref{maincor} and Remark \ref{localpert}. Such pertubation of $H^0$ gives the desired perturbation of $H$.
\end{proof}

\section{Proof of Theorems \ref{clslem}, \ref{locclslem} and \ref{hamclosing}}
In this section we will use the results in Section 2 and 3 to prove the main theorems.
\begin{proof}[Proof of Theorem \ref{locclslem}]
Denote by $x_0:=\pi(g^{\varphi}_{-2\rho}(v))$ and $D:=\overline{B^+_{\varphi}(x_0, \rho)}$. Recall that $\rho$ is the radius in Lemma \ref{radlem} thus $(D,\varphi)$ is simple. Let $\sigma: U^*_{in}\rightarrow U^*_{out}$ be the dual lense map of $(D,\varphi)$. Denote by $U^*_0:=g^{\varphi}_{-2\rho}(\mathscr{L}(U))$ and $U^*_+$ (resp. $U^*_-$) the first intersection of $U^*_{out}$ (resp. $U^*_{in}$) with forward (resp. backward) $g^{\varphi}_t$-orbits of $U^*_0$  . 

By choosing $U$ sufficiently small we may assume 

(1) $\pi(U) \cap D=\emptyset$.

(2) $\overline{\pi(U^*_0)}\subseteq B(x_0, \rho/2)$.

(3) $\pi(U^*_+)$ and $\pi(U^*_-)$ are disjoint and $\pi(U^*_+\cup U^*_-)\neq \partial D$.

Let us take $h\in C^{\infty}(M,\mathbb{R}_{\geq 0})\backslash\{0\}$ with small $C^{\infty}$ norm, and supp $h\subseteq U^*_0$.
By Lemma \ref{echlem}, there exists $t\in [0,1]$ and $\gamma\in\mathcal{P}(U^*_\varphi \Sigma,(1+th)\lambda)$ which intersects supp $h$. As supp $h$ is a compact subset of $U_0$,  there exists $\alpha_0\in(U^*_0\backslash \text{supp }h)\cap\gamma$. Since the Reeb flows on $(U^*_\varphi \Sigma,\lambda)$ and $(U^*_\varphi \Sigma,(1+th)\lambda)$ agree outside supp $h$, $\alpha':=g^{\varphi}_{2\rho}(\alpha_0)\in\gamma$. On the other hand, $\alpha_0\in U^*_0$ implies $ \alpha'\in \mathscr{L}(U)$. Thus $\alpha'\in \gamma\cap \mathscr{L}(U)$.
However, the Reeb flow on $(U^*_\varphi \Sigma,(1+th)\lambda)$ may not be a geodesic flow. In order to get a geodesic flow we need to apply Corollary \ref{maincor}.

$(1+th)\lambda$ agrees with $\lambda$ outside $U^*_0$, thus the symplectic form $\omega$ on $U^*_{in}\cup U^*_{out}$ remains unchanged under perturbation. Hence the Reeb flow on $(U^*_\varphi \Sigma,(1+th)\lambda)$ induces a symplectic map $\tilde{\sigma}: U^*_{in}\to U^*_{out}$. It is clear that supp$(\tilde{\sigma}-\sigma)\subseteq U^*_-$. Since the condition ($\ast$) is satisfied by (3), by applying Corollary \ref{maincor}, we can get a simple Finsler disc $(D,\tilde{\varphi})$ with dual lens map $\tilde{\sigma}$. As $h\to 0$ in $C^{\infty}$, we have $\tilde{\sigma}\to\sigma$ in $C^{\infty}$ and hence $\tilde{\varphi}\to\varphi$ in $C^{\infty}$. The Finsler surface $(\Sigma, \tilde{\varphi})$ is obtained by gluing $(D, \tilde{\varphi})$ with $(\Sigma\backslash D, \varphi)$. The $\tilde{\varphi}$-geodesic $\tilde{\gamma}$ starting with $\alpha'$ is periodic since it coincides with $\gamma$ outside $D$. The $\tilde{\varphi}$-geodesic in the tangent bundle starting with $v'=\mathscr{L}^{-1}(\alpha')$ is the closed geodesic we need in Theorem \ref{locclslem}.

For reversible Finsler metrics, notice that $\tilde{\sigma}-\sigma$ is supported on $U^*_-$. We define a map $\hat{\sigma}: U^*_{in}\to U^*_{out}$ via
$$\hat{\sigma}(\alpha)=\left\{
\begin{aligned}
&-\tilde{\sigma}^{-1}(-\alpha),& &\text{ if } \alpha\in -U_0^+; \\
&\tilde\sigma(\alpha), & &\text{ otherwise.}
\end{aligned}
\right.$$
If $\sigma$ is reversible, so does $\hat{\sigma}$. By applying Corollary \ref{maincor}, we can get a reversible simple Finsler disc $(D,\hat{\varphi})$ with dual lens map $\hat{\sigma}$. 
\end{proof}
In order to prove Theorem \ref{clslem} we need the following perturbation lemma from Newhouse \cite{N77}(see also \cite{X96}):
\begin{lemma}\label{pertlem}
Let $(M,\omega)$ be an $n$-dimensional compact symplectic manifold with metric $d$ induced by some Riemannian structure. There exist $\epsilon_0$ and $c>0$, such that for any $0<\delta, \epsilon\leq\epsilon_0, r\geq 1$ and $x,y\in M$ with $d(y,x)<c\delta^r\epsilon$, there is a $\psi\in \text{Diff}^{\infty}_{\omega}(M), ||\psi-id||_{C^r}<\epsilon$ such that $\psi(x)=y$ and $\psi(z)=z$ for all $z\notin B_{\delta}(x)$.
\end{lemma}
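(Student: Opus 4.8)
The plan is to produce $\psi$ as the time-one map of a compactly supported Hamiltonian flow whose Hamiltonian is a \emph{linear} function cut off at scale $\delta$, and then to control $\|\psi-\mathrm{id}\|_{C^r}$ by tracking the competition between the factor $\delta^r$ in the hypothesis and the derivative loss coming from the cutoff.

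First I would use compactness of $M$ to pass to a uniform local model. Cover $M$ by finitely many Darboux charts; by the Lebesgue number lemma there is $\epsilon_0>0$ such that for every $x\in M$ the ball $B_{\epsilon_0}(x)$ lies inside one chart in which $\omega=\sum dq_i\wedge dp_i$, and on which the chart Euclidean metric and the Riemannian metric $d$ are bi-Lipschitz equivalent with constants independent of $x$. Shrinking $\epsilon_0$ so that $\epsilon_0\le 1$, the inequality $\delta\le\epsilon_0$ then forces $\delta^r\le\delta$ for all $r\ge 1$, which is what ultimately makes the constants uniform in $r$. From now on I work inside such a chart, identifying $x,y$ with their coordinates and setting $v:=y-x$, so that $|v|$ is comparable to $d(x,y)$.

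Next comes the key construction. The constant vector field $v$ on $(\mathbb{R}^{2n},\sum dq_i\wedge dp_i)$ is Hamiltonian for a linear function $\ell_v$, so that $\|\ell_v\|_{C^0(B_\delta(x))}\lesssim|v|\delta$, $\|\ell_v\|_{C^1}\lesssim|v|$, and all higher derivatives vanish. Choosing a cutoff $\chi$ with $\chi\equiv 1$ on $B_{\delta/2}(x)$, $\chi\equiv 0$ off $B_\delta(x)$ and $\|\chi\|_{C^k}\lesssim\delta^{-k}$, I set $\tilde H:=\chi\,\ell_v$, extended by $0$ to all of $M$; since $\tilde H$ is compactly supported, $X_{\tilde H}$ is complete and $\psi:=\Phi^1_{\tilde H}\in\text{Diff}^{\infty}_{\omega}(M)$. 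Then $\psi$ fixes every point outside $B_\delta(x)$ because $X_{\tilde H}$ vanishes there; and provided $|v|<\delta/2$ — guaranteed by $\delta\le\epsilon_0\le 1$ together with $c\epsilon_0<\tfrac{1}{2}$ — the flow line $t\mapsto x+tv$, $t\in[0,1]$, stays in the region $\{\chi\equiv 1\}$, so $\psi(x)=\Phi^1_{\ell_v}(x)=x+v=y$. For the norm estimate the Leibniz rule gives $\|\tilde H\|_{C^{m}}\lesssim|v|\,\delta^{1-m}$ for $m\ge 1$, hence $\|X_{\tilde H}\|_{C^1}\lesssim\|\tilde H\|_{C^2}\lesssim|v|/\delta\lesssim c\epsilon_0$ and $\|X_{\tilde H}\|_{C^r}\lesssim\|\tilde H\|_{C^{r+1}}\lesssim|v|\,\delta^{-r}$. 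A Gronwall estimate for the variational equation of the autonomous flow of $X_{\tilde H}$ then yields $\|\psi-\mathrm{id}\|_{C^r}\lesssim\|X_{\tilde H}\|_{C^r}\,e^{C\|X_{\tilde H}\|_{C^1}}\lesssim|v|\,\delta^{-r}\le C_0\,c\,\epsilon$, so choosing $c$ small enough that $C_0c<1$ completes the argument.

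I expect the only real work to be bookkeeping rather than ideas: one must check that the single derivative lost in passing from the Hamiltonian to its flow, compounded with the $\delta^{-k}$ blow-up of the $k$-th derivative of the cutoff, produces exactly the exponent $\delta^{-r}$ that cancels the $\delta^{r}$ in the hypothesis $d(y,x)<c\delta^r\epsilon$, and that every implied constant — in the Darboux charts, the metric comparison, the Leibniz bounds, and the Gronwall estimate — can be chosen independently of $x$, $\delta$, $\epsilon$ and of $r\ge 1$; the uniformity in $r$ is precisely what forces the use of $\delta^r\le\delta$ and the smallness of $c\epsilon_0$. For non-integer $r$ one replaces the integer $C^{r+1}$ bound by the corresponding Hölder bound, with no change in the scaling. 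This is essentially Newhouse's construction from \cite{N77}; I would simply reproduce it with the stated quantitative dependence made explicit.
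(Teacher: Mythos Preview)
The paper does not supply its own proof of this lemma: it quotes the result from Newhouse \cite{N77} (with a pointer to \cite{X96}) and only adds the remark that Newhouse's argument already yields $\psi\in\mathrm{Diff}^\infty_\omega(M)$ when the unperturbed map is $C^\infty$. Your plan---a cut-off linear Hamiltonian in a Darboux chart, with the $\delta^{-r}$ derivative loss from the cutoff balanced against the hypothesis $d(x,y)<c\delta^r\epsilon$---is precisely the Newhouse construction the paper is deferring to, so there is nothing further to compare.
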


\begin{remark}
Notice that in the orginal version in \cite{N77}, they got $\psi\in \text{Diff}^{r}_{\omega}(M)$. However, the proof still holds if we replace $C^r$ by $C^{\infty}$ and the unperturbed diffeomorphism (identity map in our case) is $C^{\infty}$.
\end{remark}
\begin{proof}[Proof of Theorem \ref{clslem}]

Let $d$ be any metric induced by a Riemannian structure on $U_{\varphi}\Sigma$.  Denote by $D^{\pm}:=D^{\pm}_{\varphi}(v,\rho)$, $\alpha:=\mathscr{L}(v)$ and $\alpha^-:=g^{\varphi}_{-2\rho}(\alpha)$. Since $D^{\pm}$ are simple we use $\sigma^{\pm}$ to denote the dual lens maps of $(D^{\pm}, \varphi)$. Let $d^{-}_{in}$ be the intrinsic metric induced by $d$ on $(D^-)^*_{in}$ and denote by $B^{-}_{in}(\alpha^-,\delta):=\{\alpha'\in (D^-)^*_{in}: d^{-}_{in}(\alpha^-, \alpha')<\delta\}$. When $\delta$ is small we can define a map $\eta: \sigma^-(B^{-}_{in}(\alpha^-,\delta))\to (D^+)^*_{in}$ by mapping $\beta\in \sigma^-(B^{-}_{in}(\alpha^-,\delta))$ to the first intersection of $(D^+)^*_{in}$ with the forward $g^{\varphi}_t$-orbit of $\beta$. It is clear that $\eta(\alpha)=\alpha$ and $\eta$ is $C^{\infty}$ when $\varphi$ is $C^{\infty}$.

Let $c$ and $\epsilon_0$ be the constants for $M=(D^-)_{in}^*$ in Lemma \ref{pertlem}. From now on we fix $\delta<\epsilon_0$.  For any $\epsilon<\epsilon_0$, by Theorem \ref{locclslem}, there exists $C^r$-perturbation $\varphi_1$ of $\varphi$ and $\beta^-\in B^{-}_{in}(\alpha^-,c\delta^r\epsilon)$ so that the $g^{\varphi_1}_t$-orbit of $\beta^-$ is closed. Notice that by construction $\varphi_1$ agrees with $\varphi$ on $D^{\pm}$ hence $(D^{\pm})^*_{in}$ and $(D^{\pm})^*_{out}$ remain untouched under perturbation.

By Lemma \ref{pertlem}, one can find $\psi\in \text{Diff}^{\infty}_{\omega}((D^-)_{in}^*)$ with $||\psi-id||_{C^r}<\epsilon$, $\psi(\alpha^-)=\beta^-$ and $\psi=id$ outside $B^{-}_{in}(\alpha^-,\delta)$. Define $\tilde{\sigma}^-: (D^-)_{in}^*\to (D^-)_{out}^*$ by $\tilde{\sigma}^-:=\sigma^-\circ \psi^{-1}$. $\tilde{\sigma}^-$ is a $C^r$-perturbation of $\sigma^-$ supported on $B^{-}_{in}(\alpha^-,\delta)$. By Corollary \ref{maincor}, there exists a $C^{\infty}$ Finsler metric ${\varphi}_-$ on $D^-$, $C^r$ closed to $\varphi$, such that $\sigma^-$ is the dual lens map of ${\varphi}_-$. Moreover the $g^{{\varphi}_-}_t$-orbit of $\beta^-$ passes through $\alpha$ since $\tilde{\sigma}^-(\beta)=\alpha$.

Now we perturb the metric in $D^+$. Define $\tilde{\sigma}^+: \eta\tilde{\sigma}^-(B^{-}_{in}(\alpha^-,\delta))\to (D^+)_{out}^*$ by 
$$\tilde{\sigma}^+:=\sigma^+\eta\sigma^-(\tilde{\sigma}^-)^{-1}\eta^{-1}.$$ 
Notice that $\tilde{\sigma}^+$ is a $C^r$ perturbation of $\sigma^+$ and $\tilde{\sigma}^+$ agrees with $\sigma^+$ on the boundary. Thus we can glue $\tilde{\sigma}^+$ with $\sigma^+$ to get $\tilde{\sigma}^+: (D^+)_{in}^*\to (D^+)_{out}^*$. Apply Corollary \ref{maincor} again we get a $C^{\infty}$ Finsler metric ${\varphi}_+$ on $D^+$, $C^r$ closed to $\varphi$ and the associated dual lens map is $\tilde{\sigma}^+$. Now we use $\tilde{\varphi}$ to denote the metric attained by gluing ${\varphi}_1$ with ${\varphi}_+$ and ${\varphi}_-$. Notice that $\tilde{\sigma}^+\eta\tilde{\sigma}^-=\sigma^+\eta\sigma^-$ on $B^{-}_{in}(\alpha^-,\delta)$, thus the   $g^{\tilde{\varphi}}_t$-orbit of $\beta^-$ is still closed and it passes $\alpha$.

The reversible case is similar to the counterpart in the proof of Theorem \ref{locclslem}.
\end{proof}

Similarly, in order to prove Theorem \ref{hamclosing},  we have only to prove the corresponding  local closing lemma:
\begin{proposition}\label{hamlocclosing}
Let $(\Omega, \omega), H, h$ be the same as in the assumption of Theorem \ref{hamclosing},  then for any open set $U\subseteq H^{-1}(h)$, one can make a $C^{\infty}$-small perturbation $\tilde{H}$ of $H$ such that there is a closed $\Phi^t_{\tilde{H}}$-orbit passing through $U$.
\end{proposition}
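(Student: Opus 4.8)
The plan is to mimic the proof of Theorem~\ref{locclslem} almost verbatim, with $\Phi^t_H$ restricted to $Y:=H^{-1}(h)$ playing the role of the geodesic flow and with the local geodesic structure replaced by the local Euclidean structure furnished by the Hamiltonian flow box theorem (Theorem~\ref{flowbox}), exactly as in the proof of Proposition~\ref{pertham}. The contact-type hypothesis is precisely what makes this translation possible: it turns the compact regular level set $Y$ into a closed contact three-manifold $(Y,\lambda)$ with $d\lambda=\omega|_Y$ whose Reeb flow is a positive smooth time-change of $\Phi^t_H|_Y$, so that periodic orbits of the two flows coincide as subsets of $Y$ and Irie's Lemma~\ref{echlem} becomes available on $Y$.

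First I would fix $v\in U$ and, exactly as in the proof of Proposition~\ref{pertham}, apply Theorem~\ref{flowbox} to obtain a point $x_0$ on the backward $\Phi^t_H$-orbit of $v$, a small $\rho>0$, a neighborhood $W\subseteq\Omega$ of the orbit segment from $x_0$ to $v$, and symplectic coordinates on $W$ in which $\Phi^t_H$ is the Euclidean geodesic flow $g^{\mathrm{Euc}}_t$ on the unit cotangent bundle of a Euclidean disc $D\subseteq\mathbb R^2$, with $x_0$ the center of $D$, $D=\overline{B^+_{\mathrm{Euc}}(x_0,\rho)}$, and $v$ corresponding to a unit covector over a point of $\mathbb R^2\setminus D$ reached after Euclidean length $2\rho$. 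Let $\sigma\colon U^*_{in}\to U^*_{out}$ be the dual lens map of $(D,\mathrm{Euc})$. Now I would run Irie's argument as in the proof of Theorem~\ref{locclslem}: shrinking $U$ (keeping $v\in U$) we may assume $U\subseteq W\cap Y$, and, setting $U^*_0:=g^{\mathrm{Euc}}_{-2\rho}(U)$ and letting $U^*_\pm$ be the first intersections of $U^*_{out},U^*_{in}$ with the forward, backward Euclidean orbits of $U^*_0$, that conditions (1)--(3) of that proof hold (in particular $\pi(U^*_+\cup U^*_-)\ne\partial D$). Choose a $C^\infty$-small $h_0\in C^\infty(Y,\mathbb R_{\ge 0})\setminus\{0\}$ with $\operatorname{supp}h_0$ a small convex set inside $U^*_0$, thin in the flow direction; writing $\lambda':=(1+th_0)\lambda$, Lemma~\ref{echlem} provides $t\in[0,1]$ and $\gamma\in\mathcal P(Y,\lambda')$ with $\gamma\cap\operatorname{supp}h_0\ne\emptyset$. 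As in the proof of Theorem~\ref{locclslem} one then picks $\alpha_0\in(U^*_0\setminus\operatorname{supp}h_0)\cap\gamma$ and sets $\alpha':=g^{\mathrm{Euc}}_{2\rho}(\alpha_0)\in U\cap\gamma$; moreover, since $R_\lambda$ and $R_{\lambda'}$ have the same unparametrized orbits off $\operatorname{supp}h_0$ and, for $h_0$ small, every forward $R_{\lambda'}$-orbit issuing from $U^*_{in}$ still exits $D$, the flow $R_{\lambda'}$ induces a symplectic map $\tilde\sigma\colon U^*_{in}\to U^*_{out}$ with $\operatorname{supp}(\tilde\sigma-\sigma)\subseteq U^*_-$.

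By condition (3) and Proposition~\ref{maincor}, $\tilde\sigma$ is the dual lens map of a simple $C^\infty$ metric $\tilde\varphi$ on $D$ which equals the Euclidean metric near $\partial D$ and off a small set, and which is $C^\infty$-close to it when $h_0$ is $C^\infty$-small. Transporting the Hamiltonian $\tfrac12\bigl((\tilde\varphi^*)^2-1\bigr)$ back through the chart, as in the proof of Proposition~\ref{pertham}, produces a $C^\infty$-small perturbation $\tilde H$ of $H$ that agrees with $H$ off the small region inside $D$ where $\tilde\varphi\ne\mathrm{Euc}$. As in the last paragraph of the proof of Theorem~\ref{locclslem}, the $\Phi^t_{\tilde H}$-orbit through $\alpha'$ coincides with $\gamma$ outside $D$ --- where $\tilde H=H$, so both the level set and the flow are unchanged --- and crosses $D$ (possibly several times) along $\tilde\varphi$-geodesics realizing $\tilde\sigma$; hence it closes up, and it passes through $\alpha'\in U$. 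This proves Proposition~\ref{hamlocclosing}; Theorem~\ref{hamclosing} then follows by an argument parallel to the deduction of Theorem~\ref{clslem} from Theorem~\ref{locclslem}, using the Newhouse perturbation Lemma~\ref{pertlem}.

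The main difficulty, already present in the proof of Theorem~\ref{locclslem}, is to extract enough control from the orbit $\gamma$, over which Lemma~\ref{echlem} provides no direct handle. One must choose $\operatorname{supp}h_0$ small, convex and thin along the flow so that: $\gamma$ cannot be trapped inside $\operatorname{supp}h_0$, which is what guarantees a point $\alpha_0\in(U^*_0\setminus\operatorname{supp}h_0)\cap\gamma$ whose forward Euclidean orbit $g^{\mathrm{Euc}}_{[0,2\rho]}(\alpha_0)$ avoids $\operatorname{supp}h_0$, hence $\alpha'\in\gamma$; the induced perturbation $\tilde\sigma-\sigma$ is supported in the small set $U^*_-$ demanded by Proposition~\ref{maincor} while conditions (1)--(3) still hold; and every $R_{\lambda'}$-orbit meeting $D$ crosses it cleanly, entering through $U^*_{in}$ and exiting through $U^*_{out}$, which is what makes $\tilde\sigma$ well defined and the perturbed orbit close up even if $\gamma$ meets $D$ several times. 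A minor bookkeeping point is that $\tilde H^{-1}(h)$ differs from $Y$ inside the small perturbation region but agrees with it everywhere else; this is harmless because $\alpha'$ and the part of $\gamma$ used to close the orbit lie off that region.
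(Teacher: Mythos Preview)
Your argument is correct and follows essentially the same route as the paper's proof: use the contact hypothesis to put a contact form $\lambda$ on $Y=H^{-1}(h)$, apply Lemma~\ref{echlem} to a bump function supported on a set slightly upstream of $U$ along the flow to obtain a periodic Reeb orbit, observe that the induced first-return (lens/Poincar\'e) map is a compactly supported symplectic perturbation of the original, and realize that perturbation by a $C^\infty$-small change of the Hamiltonian via the flow-box/dual-lens machinery. The only organizational difference is that the paper packages the last step by quoting Proposition~\ref{pertham} (Poincar\'e map between two transversals $\Sigma_1,\Sigma_2$ bracketing the support of $h$), whereas you inline the content of that proposition by setting up the Euclidean disc $D$ and $\tilde\sigma$ explicitly; your version is in fact more careful than the paper's about why the resulting closed orbit actually meets $U$ (via the choice of $\alpha_0$ and $\alpha'$).
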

\begin{proof}[Proof of Proposition \ref{hamlocclosing}]
The idea is similar to the proof of Theorem \ref{locclslem}. Let $\lambda$ be the contact 1-form on $Y:=H^{-1}(h)$ and $\varphi_t$ be the associated Reeb flow. 

By shrinking $U$ if necessary, we can find $\tau>0$ so that $\Phi^{-\tau}_H(U)\cap U=\emptyset$. Take any $x_0 \in \Phi^{-\tau}_H(U)$, we can find two points $x_i:=\Phi^{t_i}_H(x_0), i=1,2$, so that $t_1<0<t_2<\tau$ and $x_1,x_2\notin U\cup \Phi^{-\tau}_H(U)$. Let $\Sigma_1, \Sigma_2$ be two transversal sections containing $x_1, x_2$ respectively with $(\Sigma_1\cup\Sigma_2) \cap(U\cup \Phi^{-\tau}_H(U))=\emptyset$.  Denote by $\mathscr{P}: \Sigma_1\to \Sigma_2$ the  Poincar\'{e} map with respect to $\Phi^t_H$. Since $d\lambda=\omega$ and $\varphi_t$ is orbit equivalent to $\Phi^t_H$ on $Y$, $\mathscr{P}$ is also the  Poincar\'{e} map with respect to $\varphi_t$. See the following figure.

\begin{figure}[htbp] 
\centering\includegraphics[width=5in]{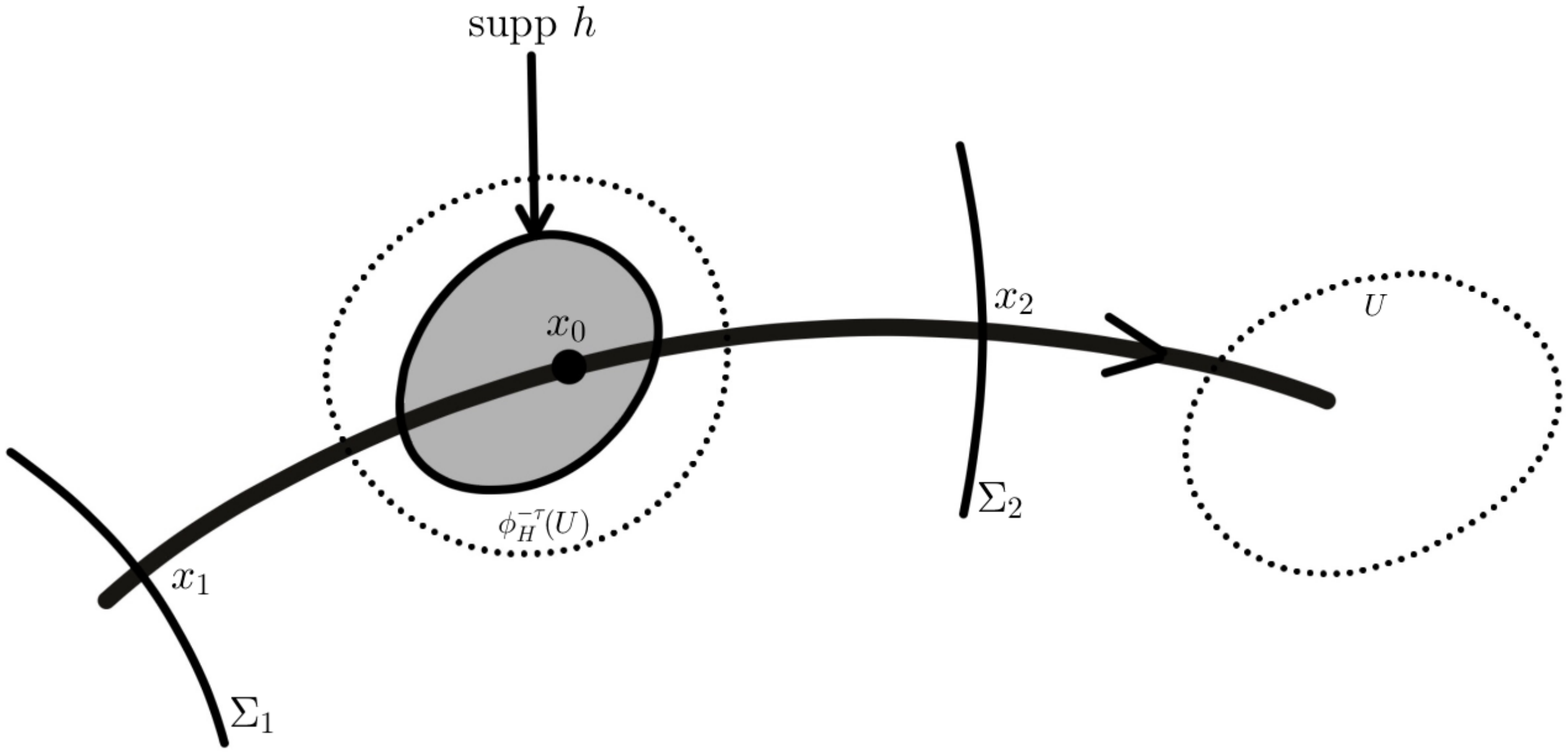} 
\caption{}\label{fig:2} 
\end{figure} 

Take $h\in C^{\infty}(Y,\mathbb{R}_{\geq 0})$ with $x_0\in\text{supp }h\subseteq \Phi^{-\tau}_H(U)$. By Lemma \ref{echlem}, we can perturb $\lambda$ in $C^{\infty}$ so that the resulting Reeb flow on $(Y,\tilde{\lambda})$ has a periodic orbit through $U$. The resulting Poincar\'{e} map $\tilde{\mathscr{P}}: \Sigma_1\to \Sigma_2$ is a $C^{\infty}$ small perturbation of $\mathscr{P}$ on a small subset of $\Sigma$ given both $C^{\infty}$ norm of $h$ and $\text{supp }h$ are sufficiently small. With Proposition \ref{pertham} we get a $C^{\infty}$ perturbation $\tilde{H}$ of $H$ so that the associated Poincar\'{e} map is $\tilde{\mathscr{P}}$.  It is clear that the $\Phi^t_{\tilde{H}}$ orbit through $\Phi^{\tau}_H(x_0)$ is periodic.

\end{proof}

Theorem \ref{hamclosing} can be derived from Proposition \ref{hamlocclosing} in a similar way as in the proof of Theorem \ref{clslem}.

\section{Examples}
\subsection{Geodesics on Finsler $S^2$}  It is well-known that a Riemannian 2-sphere has infinitely many distinct prime closed geodesics \cite{F92}\cite{B93}. On the other hand, if one considers a Finsler 2-sphere, the famous example constructed by A. Katok\cite{K73} shows that an irreversible Finsler $S^2$ may have exactly two closed geodesics. The results in Hofer-Wysocki-Zehnder \cite{HWZ03} and \cite{CGHP17} imply that a generic Finsler $S^2$ has either two or infinitely many closed geodesics. By taking $\Sigma=S^2$ in Theorem \ref{dencor}, we can improve the generic result with the following corollary:
\begin{corollary}
For any $r\geq 1$, a $C^r$-generic Finsler $S^2$ has infinitely many distinct prime closed geodesics. 
\end{corollary}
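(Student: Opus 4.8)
The plan is to combine Theorem \ref{dencor} with the dichotomy of Hofer--Wysocki--Zehnder \cite{HWZ03} together with \cite{CGHP17}, which states that a generic Finsler metric on $S^2$ has either exactly two or infinitely many distinct prime closed geodesics. Since the intersection of two residual sets in $\mathcal{F}^r(S^2)$ is residual, it suffices to show that the Finsler metrics with exactly two prime closed geodesics do \emph{not} lie in the residual set $\mathcal{P}$ of Theorem \ref{dencor}; equivalently, that a metric whose periodic geodesics are dense in $U_\varphi S^2$ cannot have only finitely many prime closed geodesics. This is the heart of the argument.

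The first step is to recall the structure of the phase space. The unit tangent bundle $U_\varphi S^2$ is a closed $3$-manifold, and the geodesic flow is a smooth nonvanishing flow on it. If $(S^2,\varphi)$ had only finitely many prime closed geodesics $\gamma_1,\dots,\gamma_k$, then the set of periodic orbits of the geodesic flow in $U_\varphi S^2$ would be contained in the finite union of the orbits $\dot\gamma_i$ (together with their iterates and, if a geodesic is run in both directions, the reversed orbits in the irreversible case as well) — in any event a finite union of closed curves. Such a finite union of one-dimensional submanifolds has empty interior in the $3$-manifold $U_\varphi S^2$, so the set of periodic orbits is certainly not dense. Hence any $\varphi$ lying in the residual set $\mathcal{P}$ from Theorem \ref{dencor} must have infinitely many prime closed geodesics.

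The second step is to assemble the conclusion: let $\mathcal{G}\subset\mathcal{F}^r(S^2)$ be the residual set on which the HWZ/\cite{CGHP17} dichotomy holds, and let $\mathcal{P}\subset\mathcal{F}^r(S^2)$ be the residual set of Theorem \ref{dencor}. Then $\mathcal{G}\cap\mathcal{P}$ is residual, and for every $\varphi\in\mathcal{G}\cap\mathcal{P}$ the dichotomy forces either two or infinitely many prime closed geodesics, while density of periodic orbits rules out the first alternative by the previous paragraph. Therefore every $\varphi$ in this residual set has infinitely many distinct prime closed geodesics, which is the assertion of the corollary.

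The only genuinely delicate point is making sure the "finitely many prime closed geodesics $\Rightarrow$ periodic orbits not dense" implication is airtight, in particular that no periodic orbit of the geodesic flow can arise other than from lifts (and iterates, and in the irreversible case reversals) of the finitely many prime closed geodesics on the base; this is immediate from the definition of the geodesic flow, since a periodic orbit of $g_t^\varphi$ projects to a closed geodesic on $S^2$, which is a multiple of one of the $\gamma_i$. I expect this to be a short remark rather than an obstacle; the substantive inputs are the already-established Theorem \ref{dencor} and the cited generic dichotomy.
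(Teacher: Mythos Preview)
Your argument is correct, and the key observation --- that density of periodic orbits in the $3$-manifold $U_\varphi S^2$ is incompatible with having only finitely many prime closed geodesics --- is exactly what the paper has in mind when it says the corollary follows ``by taking $\Sigma=S^2$ in Theorem~\ref{dencor}.''

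However, your detour through the HWZ/\cite{CGHP17} dichotomy is unnecessary. Your own first step already shows that \emph{every} $\varphi\in\mathcal{P}$ has infinitely many prime closed geodesics, so $\mathcal{P}$ itself is the residual set you want; there is no need to intersect with $\mathcal{G}$ or to invoke the two-or-infinity dichotomy at all. The paper treats the corollary as an immediate consequence of Theorem~\ref{dencor} alone, and once you strip out the redundant second step, that is precisely your argument.
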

In particular, one can make a $C^{\infty}$ perturbation of Katok's Finsler $S^2$ to get a dense set of periodic geodesics in the unit tangent bundle.

\subsection{Dynamics between KAM tori} Another interesting application of Theorem \ref{dencor} is when $\Sigma$ is $\mathbb{T}^2=\mathbb{R}^2/\mathbb{Z}^2$. By KAM theorem any $C^r$-small $(r\geq 5)$ perturbation $\varphi$ of a flat metric $\varphi_0$ still has a large family of invariant tori in the unit cotangent bundle. The dynamics on these tori are quasi-periodic but not periodic. Since the KAM tori are of codimension 1 in the energy surface, they separate the unit cotangent bundle. It was proved by Conley-Zehnder  \cite{CZ83} that KAM tori are approximated by periodic orbits. Theorem \ref{dencor} and Theorem \ref{hamclosing} show that for  $C^{\infty}$-generic Finsler metric (or a $C^{\infty}$-generic Hamiltonian flow on $T^*\mathbb{T}^2$) near any flat $\mathbb{T}^2$, the closure of all periodic orbits fills in the entire energy surface.

\subsection{Level sets of contact type} 

A hypersurface $S\subseteq \Omega$ is of contact type if and only if it is traverse to a vector field on $\Omega$ which preserves $\omega$ (cf. Lemma 2 in \cite{W79}). This criterion allows us to verify the following examples without finding a contact 1-form:

\begin{itemize}
\item If $\Omega=\mathbb{R}^4$, any star-shaped (namely, transverse to the radical vector field) compact hypersurface is of contact type \cite{W79}. In particular, any convex hypersurface is of contact type.

\item If $\Omega=T^*\Sigma$ for a closed surface $\Sigma$, let $S$ be a hypersurface whose projection onto $\Sigma$ is proper. Then $S$ is of contact type if and only if its intersection with each cotangent space $T^*_x\Sigma$ is star-shaped \cite{W79}. For example, the level sets of a Tonelli Hamiltonian are of contact type.

\item If $\Omega=T^*\Sigma$ for a Riemannian surface $\Sigma$ and $H$ is a classical Hamiltonian, namely, $H=K+V$ where $K$ is kinetic energy and $V$ is a potential on $\Sigma$, then any compact regular energy surface of $H$ is of contact type (see \cite{HZ11} Chapter 4.4). 
\end{itemize}

\bibliography{closinglemma}

\providecommand{\bysame}{\leavevmode\hbox to3em{\hrulefill}\thinspace}
\providecommand{\MR}{\relax\ifhmode\unskip\space\fi MR }
\providecommand{\MRhref}[2]{%
  \href{http://www.ams.org/mathscinet-getitem?mr=#1}{#2}
}
\providecommand{\href}[2]{#2}
\begin{thebibliography}{CGHR14}

\bibitem[AI16]{AI16}
Masayuki Asaoka and Kei Irie, \emph{{A $C^{\infty}$ closing lemma for
  Hamiltonian diffeomorphisms of closed surfaces}}, Geometric And Functional
  Analysis \textbf{26} (2016), no.~5, 1245--1254.

\bibitem[AM87]{AM87}
Ralph Abraham and Jerrold Marsden, \emph{{Foundations~of~mechanics}}, 2nd ed.,
  Addison-Wesley, Redwood City, CA, 1987.

\bibitem[AZ12]{AZ12}
Dimitri Anosov and Evgeny Zhuzhoma, \emph{{Closing lemmas}}, Differential
  Equations \textbf{48} (2012), no.~13, 1653--1699.

\bibitem[Ban93]{B93}
Victor Bangert, \emph{{On the existence of closed geodesics on two-spheres}},
  International Journal of Mathematics \textbf{4} (1993), no.~1, 1--10.

\bibitem[BCS00]{BCS00}
David Dai-Wai Bao, Shiing-Shen Chern, and Zhongmin Shen, \emph{{An Introduction
  to Riemann-Finsler Geometry}}, Springer-Verlag, New York, 2000.

\bibitem[BI16]{BI16}
Dmitri Burago and Sergei Ivanov, \emph{{Boundary distance, lens maps and
  entropy of geodesic flows of Finsler metrics}}, Geometry {\&} Topology
  \textbf{20} (2016), no.~1, 469--490.

\bibitem[CGHP]{CGHP17}
Daniel Cristofaro-Gardiner, Michael Hutchings, and Daniel Pomerleano,
  \emph{{Torsion contact forms in three dimensions have two or infinitely many
  Reeb orbits}}, arXiv:1701.02262.

\bibitem[CGHR14]{CHR15}
Daniel Cristofaro-Gardiner, Michael Hutchings, and Vinicius Gripp~Barros Ramos,
  \emph{{The asymptotics of ECH capacities}}, Inventiones Mathematicae
  \textbf{199} (2014), no.~1, 187--214.

\bibitem[Che17]{C17b}
Dong Chen, \emph{{On total flexibility of local structures of Finsler tori
  without conjugate points}}, Journal of Topology and Analysis (2017), 1--7.

\bibitem[CZ83]{CZ83}
Charles Conley and Eduard Zehnder, \emph{{An index theory for periodic
  solutions of a Hamiltonian system}}, Geometric dynamics (Rio de Janeiro,
  1981), Springer, Berlin, 1983, pp.~132--145.

\bibitem[Fra92]{F92}
John Franks, \emph{{Geodesics on $S^2$~and periodic points of annulus
  homeomorphisms }}, Inventiones Mathematicae \textbf{108} (1992), 403--418.

\bibitem[Gut00]{G00}
Carlos Gutierrez, \emph{{On $C^r$-closing for flows on 2-manifolds}},
  Nonlinearity \textbf{13} (2000), no.~6, 1883--1888.

\bibitem[Her91]{H91}
Michael Herman,
  \emph{{Exemples~de~flots~hamiltoniens~dont~aucune~perturbation~en~topologie~$C^{\infty}$~n'a
  d'orbites p{\'e}riodiques sur un ouvert~de~surfaces d'{\'e}nergies}}, Comptes
  Rendus de l'Acad{\'e}mie des Sciences. S{\'e}rie I. Math{\'e}matique
  \textbf{312} (1991), no.~13, 989--994.

\bibitem[Hof93]{H93}
Helmut Hofer, \emph{{Pseudoholomorphic curves in symplectizations with
  applications to the Weinstein conjecture in dimension three}}, Inventiones
  Mathematicae \textbf{114} (1993), 515--563.

\bibitem[HWZ03]{HWZ03}
Helmut Hofer, Kris Wysocki, and Eduard Zehnder, \emph{{Finite energy foliations
  of tight three-spheres and Hamiltonian dynamics}}, Annals of Mathematics
  \textbf{157} (2003), no.~1, 125--257.

\bibitem[HZ11]{HZ11}
Helmut Hofer and Eduard Zehnder, \emph{{Symplectic Invariants and Hamiltonian
  Dynamics}}, Birkh{\"a}user Basel, Basel, 2011.

\bibitem[Iri15]{I15}
Kei Irie, \emph{{Dense existence of periodic Reeb orbits and ECH spectral
  invariants}}, Journal of Modern Dynamics \textbf{9} (2015), no.~1, 357--363.

\bibitem[Kat73]{K73}
Anatole Katok, \emph{{Ergodic perturbations of degenerate integrable
  hamiltonian systems}}, Math. USSR Izvestija (1973), 535--571.

\bibitem[New77]{N77}
Sheldon Newhouse, \emph{{Quasi-Elliptic Periodic Points in Conservative
  Dynamical Systems}}, American Journal of Mathematics \textbf{99} (1977),
  no.~5, 1061--1087.

\bibitem[Poi92]{P92}
Henri Poincar{\'e}, \emph{{Les M\'{e}thodes~Nouvelles de la M\'{e}canique
  C\'{e}leste}}, Paris: Guathier-Villars, 1892.

\bibitem[PR83]{PR83}
Charles Pugh and Clark Robinson, \emph{{The $C^1$ Closing Lemma, including
  Hamiltonians}}, Ergodic Theory and Dynamical Systems \textbf{3} (1983),
  no.~02, 261--313.

\bibitem[Pug67a]{P67b}
Charles Pugh, \emph{{An Improved Closing Lemma and a General Density Theorem}},
  American Journal of Mathematics \textbf{89} (1967), no.~4, 1010--1021.

\bibitem[Pug67b]{P67a}
\bysame, \emph{{The Closing Lemma}}, American Journal of Mathematics
  \textbf{89} (1967), no.~4, 956--1009.

\bibitem[Rif12]{R12}
Ludovic Rifford, \emph{{Closing~geodesics~in~$C^1$ topology}}, Journal of
  Differential Geometry \textbf{91} (2012), 361--381.

\bibitem[Sma98]{S98}
Steve Smale, \emph{{Mathematical Problems for the Next Century}}, The
  Mathematical Intelligencer \textbf{20} (1998), no.~2, 7--15.

\bibitem[Tau07]{T07}
Clifford~Henry Taubes, \emph{{The Seiberg{\textendash}Witten equations and the
  Weinstein conjecture}}, Geometry {\&} Topology \textbf{11} (2007), no.~4,
  2117--2202.

\bibitem[Vit87]{V87}
Claude Viterbo, \emph{{A proof of Weinstein{\textquoteright}s conjecture in
  $\mathbb{R}^{2n}$}}, Annales de lInstitut Henri Poincare \textbf{4} (1987),
  no.~4, 337--356.

\bibitem[Wei79]{W79}
Alan Weinstein, \emph{{On the hypotheses of Rabinowitz' periodic orbit
  theorems}}, Journal of Differential Equations \textbf{33} (1979), no.~3,
  353--358.

\bibitem[Whi32]{W32}
J.H.C. Whitehead, \emph{{Convex regions in the geometry of paths}}, Quarterly
  Jour. Math. Oxford \textbf{3} (1932), no.~1, 33--42.

\bibitem[Xia96]{X96}
Zhihong Xia, \emph{{Homoclinic points in symplectic and volume-preserving
  diffeomorphisms}}, Communications in Mathematical Physics \textbf{177}
  (1996), no.~2, 435--449.

\end{thebibliography}
\bibliographystyle{amsalpha}

\end{document}